\documentclass{amsart}

\usepackage{amssymb}
\usepackage{graphicx}
\usepackage{MnSymbol}

\usepackage{enumerate}

\usepackage{amsmath,amscd}

\usepackage{amsthm}

\title{Homomorphisms from topological groups to inverse limits}

\author{Gregory R. Conner}
\address{Brigham Young University, Department of Mathematics, Provo, UT 84602, USA}
\email{conner@math.byu.edu}

\author{Samuel M. Corson}
\address{E. T. S. I. I. Universidad Polit\'{e}cnica de Madrid, Jos\'{e} Guti\'{e}rrez Abascal 2, 28006 Madrid, Spain}
\email{sammyc973@gmail.com}

\author{Curtis Kent}
\address{Brigham Young University, Department of Mathematics, Provo, UT 84602, USA}
\email{curtkent@mathematics.byu.edu}

\theoremstyle{definition}\newtheorem{theorem}{Theorem}
\theoremstyle{definition}
\theoremstyle{definition}

\theoremstyle{definition}

\theoremstyle{definition}

\theoremstyle{definition}
\theoremstyle{definition}

\theoremstyle{definition}
\theoremstyle{definition}\newtheorem{corollary}[theorem]{Corollary}
\theoremstyle{definition}
\theoremstyle{definition}\newtheorem{definition}[theorem]{Definition}
\theoremstyle{definition}
\theoremstyle{definition}
\theoremstyle{definition}\newtheorem{remark}[theorem]{Remark}
\theoremstyle{definition}
\theoremstyle{definition}\newtheorem{lemma}[theorem]{Lemma}
\theoremstyle{definition}
\theoremstyle{definition}
\theoremstyle{definition}
\theoremstyle{definition}
\theoremstyle{definition}
\theoremstyle{definition}
\theoremstyle{definition}

\newcommand{\Aut}{\operatorname{Aut}}

\newcommand{\Nb}{\operatorname{Nb}}
\newcommand{\Orb}{\operatorname{Orb}}
\newcommand{\Z}{\mathbb{Z}}
\newcommand{\N}{\mathbb{N}}

\usepackage{color}

\begin{document}

\keywords{automatic continuity, tree automorphisms, measurable cardinal}

\subjclass[2020]{Primary 20A15, 20E26, 54H11; Secondary 03E55, 20E08}

\thanks{The second author is supported by RYC2023-045493-I.  The third  author is supported by Simons collaboration grant 587001.}

\begin{abstract}
We prove a general theorem giving constraints on maps from certain topological groups to inverse limits of bounded torsion groups.  From this we obtain some automatic continuity and ultraproduct results.  For example, every homomorphism from a Polish group to a countable torsion-free residually finite group has open kernel.  Also, the Grigorchuk group is a homomorphic image of a nonprincipal ultraproduct of groups if and only if there exists a measurable cardinal.
\end{abstract}

\maketitle

\begin{section}{Introduction}

Abstract homomorphisms from large topological groups into algebraically restricted groups often display strong rigidity phenomena.   For example, every homomorphism from $\Z^\N$ to a free abelian group factors through a projection to a finite sub-product $\Z^n$. Abelian groups with this property are called \emph{slender}.  Nunke in \cite {Nunke} gave a complete classification of the class of abelian slender groups. Additional generalizations of slenderness were introduced by Eda in 1992 \cite{Eda1992} and Corson and Conner in 2019 \cite{CoCo}.

%G\"obel demonstrated that any (possibly non-abelian) group $H$ is slender (every homomorphism from $\Z^\N$ to $H$ factors through a projection to a finite sub-product)  if and only if all abelian subgroups of $H$ are slender \cite{Gobel}.  As an approach to defining slenderness for non-abelian groups this is unsatisfactory, since homomorphisms from abelian groups to non-abelian groups are always highly restricted. 

Eda defined a group $H$ to be \emph{n-slender} if  every homomorphism from the fundamental group of the infinite earring to $H$ factors through a finite rank free group \cite{Eda1992}.  This latter condition is equivalent to the kernel being open when the fundamental group is endowed with the shape topology.  Corson and Conner define a group $H$ to be \emph{cm-slender} (respectively \emph{lcH-slender}) if every homomorphism from a completely metrizable (resp. locally compact Hausdorff) topological group to $H$ has open kernel.  Both of these generalizations have received significant attention \cite{BogopolskiCorson, Co2, KMV, KramerVarghese2019, MollerParisVarghese2024, MollerVarghese} and are included in the broad genre of automatic continuity, since any abstract homomorphism from, say, a completely metrizable topological group to a cm-slender group $H$ will be continuous (when $H$ is endowed with any topology).

%A subgroup of a slender (in any of these senses) group is again slender, and typical examples of non-slender groups include nontrivial finite groups and the additive group $\mathbb{Q}$.  

A complete classification of lcH-slender groups, in terms of subgroups,  was obtained by Corson and Varghese \cite{CoVarghese}. Thus one is lead to consider more generally groups which are locally \emph{countably} compact Hausdorff slender (abbreviated \emph{lccH-slender} and defined in the natural way \cite{BogopolskiCorson}).

In the current paper we prove a rather general, technical result (Theorem \ref{main}) which states that under appropriate conditions a homomorphic image of a topological group $\mathcal{G}$ has a large abelian subgroup. The groups $\mathcal{G}$ that we consider are those already mentioned (completely metrizable, etc.) and the codomain of the homomorphism will be an inverse limit of so-called \emph{bounded torsion groups} (i.e. groups satisfying some Burnside law $x^N = 1$).

One easy-to-state consequence is the following, which is a simplification of the conjunction of Corollary  \ref{torsionfree cm-slender} with Corollary \ref{torsionfree n-slender}.

\begin{theorem}\label{introductionthm1}
    Every torsion-free residually finite group of size less than $2^{\aleph_0}$ is n-slender, cm-slender, and lccH-slender.
\end{theorem}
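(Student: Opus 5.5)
Let $H$ be torsion-free, residually finite, with $|H|<2^{\aleph_0}$. Since $H$ is residually finite, it embeds in the inverse limit of its finite quotients $H/N$, $N$ ranging over finite-index normal subgroups. Each $H/N$ is finite, hence bounded torsion: it satisfies $x^{|H/N|}=1$. So $H$ sits inside an inverse limit of bounded torsion groups, which is exactly the kind of codomain covered by Theorem \ref{main}.

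Now take a homomorphism $\phi:\mathcal{G}\to H$, where $\mathcal{G}$ is completely metrizable, locally countably compact Hausdorff, or the fundamental group of the earring with the shape topology. The plan is to argue by contradiction: suppose $\ker\phi$ is not open. We then want Theorem \ref{main} to produce a large abelian subgroup of $\phi(\mathcal{G})$. I expect this to look like an image of a product $\prod_n \langle g_n\rangle$, or of a group built from a convergent sequence of elements $g_n\to 1$ with $\phi(g_n)\neq 1$.

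From there I see two routes to a contradiction, depending on what Theorem \ref{main} delivers.

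\emph{Cardinality route.} The abelian subgroup should be the image of something like $\prod_n \Z$ or $\Z^{\N}$ under a map that is injective modulo a small error, so it has size $2^{\aleph_0}$. That contradicts $|H|<2^{\aleph_0}$.

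\emph{Torsion route.} Theorem \ref{main} may instead produce elements whose images are divisible, or infinitely divisible. For instance, in a completely metrizable group one can form infinite products $h_k=g_k g_{k+1}^{m_k}\cdots$ with $h_k = g_k h_{k+1}^{m_k}$-type relations. In a residually finite torsion-free group, a nontrivial element cannot be divisible by arbitrarily large integers within a finitely generated abelian subgroup. More precisely, a finitely generated abelian subgroup of a torsion-free group is free abelian, and free abelian groups have no nontrivial infinitely divisible elements. This forces the image to be trivial.

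In practice I would combine the two routes. Theorem \ref{main} gives continuum many distinct abelian images unless the sequence collapses. Torsion-freeness should rule out the collapse, since collapse would mean the images $\phi(g_n)$ are eventually torsion, which is impossible unless they are trivial. The cardinality bound then finishes the argument.

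For n-slenderness, the same argument applies to the earring group. Non-openness of the kernel means $\phi$ is nontrivial on the loops $a_n$ for infinitely many $n$, and the dichotomy above applies to the corresponding infinite words.

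The main obstacle is matching the exact conclusion of Theorem \ref{main} to these routes: deducing that the abelian subgroup has size continuum, or contains a nontrivial torsion or divisible element. The key point is checking that torsion-freeness is exactly the hypothesis that prevents the degenerate case. I would first try to read off from Theorem \ref{main} an injection of $2^{\aleph_0}$ into $H$ modulo torsion.
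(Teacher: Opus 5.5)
There is a genuine gap: you never verify the hypothesis of Theorem \ref{main}. It requires a set $X$ such that $\phi(X)$ generates an \emph{abelian} group and $\sup\{O(\phi(g)) \mid g\in U\cap X\}=\infty$ for \emph{every} $U\in\Nb(\mathcal{G},1)$. Knowing only that $\ker\phi$ is not open gives you, for each $U$, some $g_U\in U$ with $\phi(g_U)\neq 1$. These images need not commute. Nor need any nontrivial element lie in $\phi(U)$ for all $U$: for the identity map of $\Z^{\N}$, $\bigcap_U\phi(U)=\{0\}$. So your ``collapse'' and divisibility discussion has nothing to act on, and the cardinality route cannot be started, even though it is the right endgame via $(*)_1$ and $(*)_2$.

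The missing idea is the stabilization lemma (Lemma \ref{fromCoCo} / Lemma \ref{Artinianbutforcountably}). Since $|\phi(\mathcal{G})|<2^{\aleph_0}$, there is $V\in\Nb(\mathcal{G},1)$ with $\phi(U)=\phi(V)$ for every open $U\subseteq V$. This is where the size hypothesis does its real work, before Theorem \ref{main} is invoked. The paper then argues as in Theorem \ref{rebtorsion}:
\begin{enumerate}
\item Suppose $\phi(V)\neq 1$ and pick $a\neq 1$ in $\phi(V)$. Since $H$ is torsion-free, $a$ has infinite order.
\item By residual finiteness, choose finite-index normal $L_i$ with $a,a^2,\dots,a^{i+1}\notin L_i$, and set $H_i=H/(L_0\cap\cdots\cap L_i)$.
\item Take $X=\phi^{-1}(a)$; by stabilization it meets every identity neighborhood.
\item Theorem \ref{main} then gives $|\phi(V)|\geq 2^{\aleph_0}$, a contradiction. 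Hence $V\subseteq\ker\phi$.
\end{enumerate}
Torsion-freeness is used only to make $a$ have infinite order, not to rule out a degenerate case of the construction.

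There are two smaller problems.
\begin{itemize}
\item Theorem \ref{main} needs an $\N$-indexed inverse system. The system of all finite quotients of $H$ is not of this form in general (e.g.\ when $H$ is uncountable). However, no embedding of $H$ is needed: countably many $L_i$ adapted to the single element $a$ suffice.
\item Your earring reduction is false. Failure of n-slenderness means $\phi(\mathbb{H}^s)\neq 1$ for all $s$, which does not force $\phi(b_n)\neq 1$ for infinitely many $n$. For instance, the exponent-sum map $\mathbb{H}\to\Z^{\N}\to\Z^{\N}/\bigoplus_{\N}\Z$ kills every $b_n$ but is nontrivial on every $\mathbb{H}^s$. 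Also, $\mathbb{H}$ is not covered by Theorem \ref{main}. The paper instead uses the companion Theorem \ref{mainforH} with the $\mathbb{H}$-analogue of stabilization ($\phi(\mathbb{H}^t)=\phi(\mathbb{H}^s)$ for all $t\geq s$, for suitable $s$). After that the argument is identical (Theorem \ref{rebtorsionforH}, Corollary \ref{torsionfree n-slender}).
\end{itemize}
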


For example, the Basilica group \cite{GrigorchukZuk} satisfies the hypotheses of Theorem \ref{introductionthm1}.  Note that the assumption ``torsion-free'' cannot be dropped, since nontrivial torsion implies that a group is not slender.  Also, the assumption regarding the size cannot be dropped since the group $\Z^\N$ is completely metrizable, residually finite and the identity endomorphism witnesses that the group is not cm-slender. The group $\Z^\N$ is also a homomorphic image, via a homomorphism with non-open kernel, of the infinite earring group and so is not n-slender.

A consequence of a celebrated result of Nikolov and Segal states that if $\phi$ is an abstract homomorphism from a topologically finitely generated profinite group $\mathcal{G}$ to a residually finite group then $\ker(\phi)$ is closed (see \cite[p. 519 (F)]{NS2}).  By contrast, Theorem \ref{introductionthm1} strengthens the requirements on the codomain, loosens the requirements on the domain, and strengthens the conclusion regarding the kernel.

Other consequences of Theorem \ref{main} allow the presence of torsion in the codomain.  In the terminology of \cite{Co}, a homomorphic image of a nonprincipal ultraproduct of groups $\prod_{\N} G_n/\mathcal{U}$ is a \emph{ui} group.  Compact groups (particularly finite groups) and vector spaces over a field are ui groups.  Groups having nontrivial cm-slender homomorphic quotients are not ui.  Some examples of (infinitely generated) torsion groups which are not ui were given in \cite{Co}, but no finitely generated torsion examples were produced.

A second consequence of Theorem \ref{main} states that a small residually (bounded torsion) group having abelian subgroups of unbounded torsion is not u.i. (see Theorem \ref{withultraprod}).  This can be applied, for example, to weakly branch subgroups of the automorphism group of a finitely branching rooted tree.  As a result, Grigorchuk's group (which is finitely generated torsion) is not ui.  Combined with some new characterizations of ui groups developed in Section \ref{sec: applications to ui}, this gives rise to the rather eclectic assertion announced in the abstract (see Corollary \ref{Ggroupmeasurable}).

\begin{theorem} Grigorchuk's group is a homomorphic image of a nonprincipal ultraproduct of groups if and only if there exists a measurable cardinal.
\end{theorem}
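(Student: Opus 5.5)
The plan has two directions, and the measurable cardinal enters in opposite ways in each. Throughout I rely on the later results the introduction advertises: Theorem \ref{withultraprod}, which says a small residually (bounded torsion) group with abelian subgroups of unbounded torsion is not ui, and the characterizations of ui groups from Section \ref{sec: applications to ui}. Write $\Gamma$ for Grigorchuk's group.

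\emph{If there is a measurable cardinal, then $\Gamma$ is ui.} Let $\kappa$ be measurable. I will show that every group of size less than $\kappa$ is a quotient of a nonprincipal ultraproduct indexed by $\N$. A countably complete nonprincipal ultrafilter $\mathcal{V}$ on $\kappa$ gives $\Gamma^\kappa/\mathcal{V}\cong\Gamma$, since $\Gamma$ is countable and $\mathcal{V}$ is countably complete. So it suffices to map some $\prod_\N G_n/\mathcal{U}$ onto $\Gamma^\kappa/\mathcal{V}$. My first attempt is to take $G_n=\Gamma^\kappa$. I would then try to produce a surjection $\prod_\N \Gamma^\kappa/\mathcal{U}\to\Gamma^\kappa/\mathcal{V}$ from a suitable combination of ultrafilters, for instance a $\mathcal{U}$-limit of coordinates. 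This is exactly the kind of statement the characterizations in Section \ref{sec: applications to ui} should supply, so I would cite the relevant one (``if a measurable exists, every group of size less than the least measurable is ui'') rather than reconstruct it.

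\emph{If $\Gamma$ is ui, then there is a measurable cardinal.} I argue by contrapositive: assume no measurable cardinal exists and show $\Gamma$ is not ui by verifying the hypotheses of Theorem \ref{withultraprod}. Three checks are needed.
\begin{enumerate}[(i)]
\item $\Gamma$ is residually (bounded torsion). It acts faithfully on the binary rooted tree, and the level stabilizers $\mathrm{St}(n)$ have finite index. Each quotient $\Gamma/\mathrm{St}(n)$ embeds in the automorphism group of a finite tree, so it is a finite $2$-group and hence bounded torsion. These quotients separate points.
\item $\Gamma$ is countable, which should meet whatever ``small'' means in that theorem. If the smallness there is ``below the least measurable'', the absence of measurables makes it automatic.
\item $\Gamma$ has abelian subgroups of unbounded torsion. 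Because $\Gamma$ is weakly branch, I would use the rigid stabilizers: for each $n$, take elements of order $2$ supported on pairwise disjoint subtrees at level $n$. Rigid stabilizers of distinct vertices commute, so these generate an elementary abelian $2$-group of rank $2^n$. The natural reading of ``unbounded torsion'' is that the abelian subgroups' images in the residual quotients do not uniformly satisfy a single Burnside law compatible with the inverse system. I expect the abelian subgroups $\langle a\rangle$ with $a$ of order $2^k$ to be what is actually needed: every element of $\Gamma$ has finite order, but these orders are unbounded.
\end{enumerate}

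The main obstacle is matching the exact form of ``unbounded torsion'' in Theorem \ref{withultraprod}, and in particular whether it requires abelian subgroups whose exponents are unbounded. For Grigorchuk's group this is the classical fact that element orders are unbounded: $\Gamma$ is infinite, finitely generated and residually finite, so by Zelmanov's solution of the restricted Burnside problem it cannot have bounded exponent. The cyclic subgroups $\langle g\rangle$ then give abelian subgroups of unbounded exponent. I would invoke this directly, and if needed also the remark in the introduction that the theorem applies to weakly branch subgroups of finitely branching tree automorphism groups.
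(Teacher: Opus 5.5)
\textbf{Main gap: you have conflated the statement with ``$\Gamma$ is ui.''} The statement allows a nonprincipal ultraproduct $\prod_{i\in I}G_i/\mathcal{U}$ over an \emph{arbitrary} index set $I$, whereas ui requires $I=\N$.

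\emph{First direction.} The paper proves in ZFC alone that Grigorchuk's group is never ui (Theorem \ref{thmfortreegroups}(3), via Corollary \ref{notui}). So your first direction sets out to prove something false. The ``characterization'' you plan to cite, that every group smaller than the least measurable is ui, is also false, and $\Gamma$ itself is a counterexample. What this direction actually needs is something you already wrote down: $\Gamma^\kappa/\mathcal{V}\cong\Gamma$ is a nonprincipal ultraproduct mapping onto $\Gamma$, and you should stop there. This is (ii)$\Rightarrow$(i) of Lemma \ref{equivalencesmallerthanmeas}.

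\emph{Second direction.} Your argument only shows that $\Gamma$ is not ui. Notice that you never use the assumption that no measurable cardinal exists, since countability already gives $|\Gamma|<2^{\aleph_0}$. It does not rule out $\Gamma$ being an image of $\prod_{i\in I}G_i/\mathcal{U}$ with $I$ uncountable. The missing ingredient is Bergman's reduction (Lemma \ref{tocountable}):
\begin{enumerate}[(a)]
\item if $\mathcal{U}$ is not $\aleph_1$-complete, partition $I$ into countably many sets $Y_j\notin\mathcal{U}$;
\item regroup $\prod_{i\in I}G_i$ as $\prod_{j\in\N}\prod_{i\in Y_j}G_i$, which yields a surjection from an $\N$-indexed nonprincipal ultraproduct onto $\prod_{i\in I}G_i/\mathcal{U}$, so $\Gamma$ would be ui;
\item since $\Gamma$ is not ui, $\mathcal{U}$ must be a nonprincipal $\aleph_1$-complete ultrafilter, and such an ultrafilter forces a measurable cardinal to exist.
\end{enumerate}
This is exactly how the paper combines Theorem \ref{thmfortreegroups}(3) with Lemma \ref{equivalencesmallerthanmeas}.

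\textbf{Second gap: check (iii).} The hypothesis needed in Corollary \ref{notui} and Theorem \ref{withultraprod} is a \emph{single} abelian subgroup that is not bounded torsion. Neither of your candidates supplies this:
\begin{itemize}
\item each cyclic subgroup $\langle g\rangle$ with $O(g)<\infty$ has exponent $O(g)$, so it is bounded torsion, and Zelmanov's theorem applied to cyclic subgroups gives only a family of such subgroups;
\item your elementary abelian $2$-groups of rank $2^n$ have exponent $2$, so they are bounded torsion as well.
\end{itemize}
You need to combine your two ideas. Take pairwise $\preceq$-incomparable vertices $s_0,s_1,\ldots$, and choose $a_m\in\Gamma[s_m]$ with $O(a_m)\geq m+2$; such elements exist by Lemma \ref{largeorders}, since $\Gamma$ is weakly branch. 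The $a_m$ have disjoint supports, so they commute. Hence $\langle a_0,a_1,\ldots\rangle$ is abelian and not of bounded exponent. This is Lemma \ref{applicabilitytotreeation}.
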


The paper is organized as follows. In Section \ref{sec: main result}, we prove our structure theorem for homomorphisms into inverse limits of bounded torsion groups and derive its consequences for residually (bounded torsion) groups. Section \ref{sec: applications to ui} applies these ideas to ultraproduct images and measurable cardinals. Section \ref{sec: applications to rooted trees} treats automorphism groups of rooted trees and weakly branch groups, culminating in the application to Grigorchuk's group.

\end{section}

\begin{section}{Main theorem}\label{sec: main result}

Our notation will be standard.  We let $|X|$ denote the cardinality of a set $X$, $2^{\aleph_0}$ be the cardinality of the continuum, and $f \upharpoonright Y$ be the restriction of the function $f$ to a subset $Y$ of the domain.  We shall consider inverse systems $\pi_{i+1, i}: H_{i + 1} \rightarrow H_i$ of group homomorphisms indexed by the set $\mathbb{N}$ of natural numbers.  We do not require the $\pi_{i+1, i}$ to be surjective.  In this context, let $\pi_j: \varprojlim H_i \rightarrow H_j$ denote the natural homomorphic projection.  Let $O(g)$ denote the order of a group element $g$, so that $O(\cdot)$ takes values in $(\mathbb{N} \setminus \{0\}) \cup \{\infty\}$.  For a topological group $\mathcal{G}$, let $\Nb(\mathcal{G}, 1)$ denote the collection of open neighborhoods of identity in $\mathcal G$.

\begin{theorem}\label{main}  Suppose that

\begin{itemize}

\item $G$ is a finite index subgroup of a topological group $\mathcal{G}$, with $\mathcal{G}$ completely metrizable or locally countably compact Hausdorff;

\item $\{\pi_{i + 1, i}: H_{i + 1} \rightarrow H_i \mid i \in \mathbb{N}\}$ is a collection of group homomorphisms;

\item $\phi: G \rightarrow \varprojlim H_i$ is an abstract group homomorphism;

\item for each $j \in \mathbb{N}$ there is a natural number $N_j \in \mathbb{N}$ such that for all $g \in G$, $O(\pi_j \circ \phi(g)) \leq N_j$;

\item there is a subset $X \subseteq G$ such that $\phi(X)$ generates an abelian group and for each $U \in \Nb(\mathcal{G}, 1)$ we have $\sup\{O(\phi(g)) \mid g \in U \cap X\} = \infty$.

\end{itemize}

\noindent Then for each $V \in \Nb(\mathcal{G}, 1)$ there is a subset $\overline{X}_V \subseteq V \cap G$ such that

\begin{enumerate}

\item[$(*)_1$]  $|\overline{X}_V| = 2^{\aleph_0}$;

\item[$(*)_2$] $\phi \upharpoonright \overline{X}_V$ is injective;

\item[$(*)_3$] $\phi(\overline{X}_V)$ generates an abelian group;

\item[$(*)_4$] $\phi(\overline{X}_V)$ contains an element of infinite order.
\end{enumerate}
\end{theorem}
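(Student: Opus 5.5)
We build a Cantor scheme of elements $g_s$, indexed by finite binary sequences $s$, and take $\overline{X}_V$ to be the set of limits of infinite products along branches. Throughout we use only the elements of $X$.

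\textbf{Reduction to a subset of $G$.} Since $G$ has finite index, every $U \in \Nb(\mathcal{G},1)$ contains $U'\in\Nb(\mathcal{G},1)$ with $U'\subseteq U$. We would like to replace $X$ by $X\cap G$, which is automatic since $X\subseteq G$. Elements of $X$ near the identity can be multiplied in $\mathcal{G}$, and infinite convergent products of elements of $G$ need not lie in $G$. We handle this by taking products in $\mathcal{G}$ and then multiplying by a fixed coset representative. Alternatively, we first show that $G$ contains an open subgroup-like neighborhood, using the Baire category theorem (completely metrizable case) or countable compactness arguments (lccH case). A simpler route is to take $N=[\mathcal{G}:G]!$: since the normal core of $G$ has finite index dividing $N$ in the appropriate sense, $x^{N}$ lies in the core for every $x$. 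We therefore work with $N$-th powers, which only divides orders by at most $N$, so the unbounded-order hypothesis survives.

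\textbf{Choosing the building blocks.} Fix $V$. Inductively choose $x_0,x_1,\dots\in X$ together with a shrinking sequence of neighborhoods $V\supseteq U_0\supseteq U_1\supseteq\cdots$. The $U_n$ are chosen so that any product $x_{n}^{e_n}x_{n+1}^{e_{n+1}}\cdots$ with $e_k\in\{0,1\}$ converges in $\mathcal{G}$ and lands in $V$. In the completely metrizable case we use a compatible left-invariant metric and make $U_n$ of diameter below $2^{-n}$. In the lccH case we instead take closures of neighborhoods inside a countably compact set, pass to accumulation points, and use the countable compactness of products in the usual way from \cite{BogopolskiCorson}.

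The key arithmetic choice goes as follows. At stage $n$ let $j_n$ be large, and let $M_n=N_{0}\cdots N_{j_n}$ times the $N$ above. Replace the candidate $x$ by $y_n=x^{M_n}$, where $x\in U_n\cap X$ has order larger than any prescribed bound. Then $\pi_j\phi(y_n)=1$ for all $j\le j_n$, while $\phi(y_n)$ still has large order; this requires choosing $x$ with $O(\phi(x))>M_n\cdot(\text{previous bound})$. Such $x$ exist by the hypothesis on $X$. Choosing $U_n$ so that $U_n$ is closed under taking the needed powers is a minor technical point that we handle with the continuity of $x\mapsto x^{M_n}$.

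\textbf{Defining $\overline{X}_V$ and checking $(*)_1$--$(*)_4$.} For $\sigma\in 2^{\N}$, set $g_\sigma=\prod_n y_n^{\sigma(n)}$, the limit of partial products. Because $\pi_j\phi(y_n)=1$ for $n$ with $j_n\ge j$, we get
\[\pi_j\phi(g_\sigma)=\pi_j\phi\Big(\prod_{n<m}y_n^{\sigma(n)}\Big)\cdot\pi_j\phi(t_{m}),\]
where $t_m$ is the tail. Since $\phi$ is only abstract, we cannot conclude directly that the tail contributes trivially. Instead we use the bounded-torsion hypothesis together with the standard trick: the tail $t_m$ equals $y_m^{\sigma(m)}t_{m+1}$. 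This is the main obstacle, namely controlling $\phi$ on infinite products with no continuity.

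The approach I would try first is a diagonal argument in the style of Shelah, Eda and Conner--Corson. We use continuum many branches and the bound $N_j$ to show that for each $j$ the map $\sigma\mapsto\pi_j\phi(g_\sigma)$ agrees with the "finite-part" homomorphism on all but a controlled set. To arrange this, we insert extra factors $z_n$ so that each tail is an $N_j$-th power of another tail. Concretely, define tails recursively by $t_m=y_m^{\sigma(m)}t_{m+1}^{N_{j_m}}$, which converges by the same metric or compactness construction. Then $\pi_j\phi(t_{m+1}^{N_j})=1$ by bounded torsion. Now $\pi_j\phi(g_\sigma)$ is computed by a finite product of elements $\pi_j\phi(y_n)$, and these commute since $\phi(X)$ generates an abelian group.

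Consequences:
\begin{enumerate}
\item[$(*)_3$] The images $\phi(g_\sigma)$ are determined coordinatewise by finite products of commuting elements, hence commute pairwise.
\item[$(*)_2$] If $\sigma\ne\tau$ first differ at $n$, then at a level $j$ where $\pi_j\phi(y_n)\neq 1$ but the later $y$'s vanish, the two images differ. Obtaining such a level requires choosing $j_{n}$ so that $\pi_{j_n}\phi(y_n)$ has order greater than the product of the orders of earlier contributions.
\item[$(*)_1$] This follows from $(*)_2$ and $|2^{\N}|=2^{\aleph_0}$.
\item[$(*)_4$] The orders of $\pi_j\phi(g_\sigma)$ are unbounded in $j$ when $\sigma$ is, for example, constantly $1$, by the growth of the orders, so $\phi(g_\sigma)$ has infinite order.
\end{enumerate}
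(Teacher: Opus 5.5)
Once you replace the flat product by the recursive tails $t_m=y_m^{\sigma(m)}t_{m+1}^{e_m}$, you are running the paper's nested-power construction, and $(*)_1$--$(*)_3$ go through as you indicate, up to bookkeeping:
\begin{itemize}
\item $e_m$ must be a multiple of $N_{j_m}!$, since an order bound $\le N$ does not give $g^N=1$.
\item $e_m$ must also be a multiple of $K=[\mathcal G:G]!$, since the limit $t_{m+1}$ need not lie in $G$.
\item The pre-powering $y_n=x^{M_n}$ is redundant. As written, it makes $\pi_{j_n}\phi(y_n)=1$, which contradicts your later demand that this element have large order.
\end{itemize}

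\textbf{The genuine gap is $(*)_4$.} Let $h_n=\phi(y_n^{e_{n-1}\cdots e_0})$ be the effective contributions. The all-ones branch satisfies $\pi_{j_n}\phi(g_{\mathbf 1})=\pi_{j_n}(h_0h_1\cdots h_n)$. Large orders of the individual $h_n$ do not force these partial products to have growing order, because commuting elements can telescope.

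For example, suppose $h_n=a_na_{n+1}^{-1}$ with the $a_n$ commuting, $\pi_{j_{n-1}}(a_n)=1$, and $O(\pi_{j_n}(a_n))\to\infty$. Then everything your construction guarantees holds: the $h_n$ commute, $\pi_{j_m}(h_n)=1$ for $m<n$, and $O(\pi_{j_n}(h_n))$ is large. Yet $\pi_{j_n}(h_0\cdots h_n)=\pi_{j_n}(a_0)$ for all $n$, so $\phi(g_{\mathbf 1})=a_0$, which may have order $2$.

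This really happens in the nested construction. Take $\phi$ the identity of $\prod_c\mathbb{Z}/p_c=\varprojlim_i\prod_{c\le i}\mathbb{Z}/p_c$, with $p_c$ the $c$-th prime and $X$ the whole group, and make admissible but adversarial choices of the $x_n$ and $j_n$. Your selection rule only imposes lower bounds on orders, so it does not exclude this. The inequality you state under $(*)_2$ ("order greater than the product of earlier orders") would at best give $O(h_0\cdots h_n)\ge 2$, not a bound tending to infinity.

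\textbf{The missing idea} is the paper's counting argument, applied to a branch chosen after the whole construction.
\begin{itemize}
\item If some $h_m$ has infinite order, use the branch equal to $1$ only at $m$.
\item Otherwise, choose $r_0<r_1<\cdots$ so that $O(h_{r_{\ell+1}})\ge(\ell+3)\prod_{i\le\ell}O(h_{r_i})$, and so that $\pi_{j_{r_{\ell+1}-1}}$ already detects the full order of $h_{r_0}\cdots h_{r_\ell}$.
\end{itemize}
In the second case, set $M=O(h_{r_0}\cdots h_{r_{\ell+1}})$. Commutativity gives $h_{r_{\ell+1}}^M\in\langle h_{r_0}\cdots h_{r_\ell}\rangle$. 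Hence
$O(h_{r_{\ell+1}})/M\le O(h_{r_{\ell+1}}^M)\le\prod_{i\le\ell}O(h_{r_i})$, which forces $M\ge\ell+3$. The branch equal to $1$ exactly on $\{r_\ell\}$ then has, for every $\ell$, a projection of order at least $\ell+3$, so its image has infinite order.

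Alternatively, you can build this growth condition, the full-order detection and the infinite-order case split into the inductive choice of $x_n$ and $j_n$. After that, the all-ones branch does work.
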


\begin{proof}
Assume first that the topological group $\mathcal{G}$ is a complete metric space with the metric $d$.  As $G$ is of finite index in $\mathcal{G}$, select $K \in \mathbb{N} \setminus \{0\}$ for which $\{g^K\mid g \in \mathcal{G}\} \subseteq G$ (for example, set $K$ to be the factorial of the index $|\mathcal{G} : G|$).  Let $V \in \Nb(\mathcal{G}, 1)$ be given.  We inductively construct sequences

\begin{itemize}

    \item $V_m$ of elements in $\Nb(\mathcal{G}, 1)$;

    \item $g_m$ of elements in $X$;

    \item $j_m$ of natural numbers; and

    \item $k_m$ of natural numbers.

\end{itemize}

\noindent Let $V_0 = V$, pick $g_0 \in V_0 \cap X$ with $O(\phi(g_0)) > 1$, pick $j_0$ such that $O(\pi_{j_0} \circ \phi(g_0)) > 1$, and let $k_0 = (N_{j_0}!)K$.  Let $D = \min\{d(g_0, \mathcal{G} \setminus V_0), d(1, \mathcal{G} \setminus V_0)\}$ in case $V \neq \mathcal{G}$ and otherwise let $D = 1$.  Suppose we have selected $V_0, \ldots, V_m$ and $g_0, \ldots, g_m$ and $j_0, \ldots, j_m$ and $k_0, \ldots, k_m$.  Select $V_{m + 1} \in \Nb(\mathcal{G}, 1)$ so that for any $g \in V_{m+1}$ and sequence $\epsilon_0, \ldots, \epsilon_m$ of zeros and ones, each of the following quantities is less than $\frac{D}{3^{m + 1}}$:

\begin{itemize}

\item $d(g^{k_m}, 1)$;

\item $d\big(g_m^{\epsilon_m}(g)^{k_m}, g_m^{\epsilon_m}\big)$;

\item $d\big(g_{m - 1}^{\epsilon_{m - 1}}(g_m^{\epsilon_m}(g)^{k_m})^{k_{m - 1}}, g_{m - 1}^{\epsilon_{m - 1}}(g_m^{\epsilon_m})^{k_{m - 1}}\big)$;

$\vdots$

\item $d\big(g_0^{\epsilon_0}(g_1^{\epsilon_1}(\cdots (g_m^{\epsilon_m}(g)^{k_m})^{k_{m - 1}}  \cdots)^{k_1})^{k_0}, g_0^{\epsilon_0}(g_1^{\epsilon_1}(\cdots (g_m^{\epsilon_m})^{k_{m - 1}}  \cdots)^{k_1})^{k_0}\big)$.

\end{itemize}

\noindent Pick $g_{m + 1} \in V_{m + 1} \cap X$ with $O(\phi(g_{m + 1}^{k_mk_{m - 1}\cdots k_0})) > m + 2$, pick $j_{m + 1} > j_m$ such that $O(\pi_{j_{m + 1}} \circ \phi( g_{m+1}^{k_{m} \cdots k_1k_0})) > m + 2$, and let $k_{m + 1} = N_{j_{m + 1}}!$.

Note that for each sequence $\overline{\epsilon} = (\epsilon_t)_{t \in \mathbb{N}}$ of zeros and ones and fixed $m \in \mathbb{N}$, the sequence $g_m^{\epsilon_m}(g_{m + 1}^{\epsilon_{m + 1}}(\cdots (g_n^{\epsilon_n})^{k_{n - 1}}\cdots)^{k_{m + 1}})^{k_m}$ in parameter $n \geq m$ is Cauchy and therefore converges in $\mathcal{G}$ to, say $g_{m, \overline{\epsilon}}$.  By continuity of multiplication we have $g_{m, \overline{\epsilon}} = g_m^{\epsilon_m}(g_{m + 1, \overline{\epsilon}})^{k_m}$ for every $m \in \mathbb{N}$.  Note also that for any $\overline{\epsilon}$ we have $d(g_0^{\epsilon_0}, g_{0, \overline{\epsilon}}) < D$ and so $g_{0, \overline{\epsilon}} \in V$.  Let $\overline{X}_V$ be the set of those $g_{0, \overline{\epsilon}}$ where $\overline{\epsilon}$ is a sequence of zeros and ones.

We have already seen that $\overline{X}_V \subseteq V$.  For any given $\overline{\epsilon}$, we see that $g_{0, \overline{\epsilon}} = g_0^{\epsilon_0}(g_{1, \overline{\epsilon}})^{k_0}$, $g_0^{\epsilon_0} \in X \cup \{1\} \subseteq G$, and $(g_{1, \overline{\epsilon}})^{k_0} \in G$ (as a $K$-power in $\mathcal{G}$ by our choice of $k_0$).  Thus $\overline{X}_V \subseteq V \cap G$.  Note also that given an arbitrary $n \in \mathbb{N}$ we have

$$
\begin{array}{ll}
\pi_{j_n} \circ \phi(g_{0, \overline{\epsilon}}) & = \pi_{j_n} \circ \phi(g_{0}^{\epsilon_0}(g_1^{\epsilon_1}(\cdots g_n^{\epsilon_n}(g_{n+1, \overline{\epsilon}})^{k_n}  \cdots)^{k_1})^{k_0}) \vspace*{2mm}\\
& = \pi_{j_n} \circ \phi\big(g_{0}^{\epsilon_0}(g_1^{\epsilon_1}(\cdots (g_n^{\epsilon_n})^{k_{n - 1}}  \cdots)^{k_1})^{k_0}\big)     \vspace*{2mm}\\
& = \pi_{j_n} \circ \phi\big(g_{0}^{\epsilon_0}(g_1^{\epsilon_1})^{k_0}(g_2^{\epsilon_2})^{k_1k_0} \cdots (g_n^{\epsilon_n})^{k_{n - 1} \cdots k_0}\big)    \vspace*{2mm}\\
\end{array}
$$

\noindent since $\pi_{j_n} \circ \phi(g_{n+1, \overline{\epsilon}}^{k_n})$ is trivial (recall $k_n=N_{j_n} !$ for $n>0$) and the $\phi(g_m)$ commute with each other.  Now given distinct $\overline{\epsilon}$ and $\overline{\epsilon'}$, let $n \in \mathbb{N}$ be minimal such that $\epsilon_n \neq \epsilon_n'$.   Without loss of generality, we may assume  that $\epsilon_n = 0$ and $\epsilon_n' = 1$. Then we have $\pi_{j_n} \circ \phi (g_{0, \overline{\epsilon}}^{-1}g_{0, \overline{\epsilon'}}) = \pi_{j_n} \circ \phi(g_n^{k_{n-1} \cdots k_0}) \neq 1$.  Therefore $(*)_1$ and $(*)_2$ both hold.  For sequences $\overline{\epsilon}$ and $\overline{\epsilon'}$ and any $n \in \mathbb{N}$, we know that $\pi_{j_n} \circ \phi(g_{0, \overline{\epsilon}})$ commutes with $\pi_{j_n} \circ \phi(g_{0, \overline{\epsilon'}})$, since all $\phi(g_m)$ commute.  Therefore $\phi(g_{0, \overline{\epsilon}})$ commutes with $\phi(g_{0, \overline{\epsilon'}})$ and $(*)_3$ holds.

Now we prove $(*)_4$.  For the sake of economy, let $h_m = \phi(g_m^{k_{m-1} \cdots k_1k_0})$.  Then $h_m$ and $h_n$ commute for all $n, m \in \mathbb{N}$ and $O(h_m) \geq O(\pi_{j_m} (h_m)) \geq m + 1$.  If for some $m \in \mathbb{N}$ we have $O(h_m) = \infty$, then we may already conclude $(*)_4$ by choosing $\overline{\epsilon}$ to be $1$ only  at the index  $m$ and noting that for this $\overline{\epsilon}$, $\phi(g_{0, \overline{\epsilon}}) = h_m$.  So we may assume that $O(h_m)<\infty$ for all $m \in \mathbb{N}$.  We will pick an increasing sequence $r_0 < r_1 < r_2 < \cdots$ of natural numbers.  Let $r_0 = 0$.  Select $r_1 > r_0$ large enough that $O(\pi_{j_{r_{1} - 1}}(h_{r_0})) = O(h_{r_0})$ and $O(h_{r_1}) \geq 3O(h_{r_0})$.  Assuming that $r_0, \ldots, r_{\ell}$ have been determined, select $r_{\ell + 1} > r_{\ell}$ large enough that

\begin{itemize}

\item $O(\pi_{j_{r_{\ell + 1} - 1}}(h_{r_0} \cdots h_{r_{\ell}})) = O(h_{r_0} \cdots h_{r_{\ell}})$; and

\item $O(h_{r_{\ell + 1}}) \geq (\ell + 3)O(h_{r_0}) \cdots O(h_{r_{\ell}})$.

\end{itemize}

\noindent Since all of the $h_m$ commute, we know for a fixed $\ell$ that $O(h_{r_0} \cdots h_{r_{\ell}})$ divides $O(h_{r_0}) \cdots O(h_{r_{\ell}})$, so in particular $O(h_{r_0} \cdots h_{r_{\ell}}) \leq O(h_{r_0}) \cdots O(h_{r_{\ell}})$.  Now letting $M = O(h_{r_0} \cdots h_{r_{\ell + 1}})$ we have $(h_{r_0} \cdots h_{r_{\ell + 1}})^M = 1$, which implies that $h_{r_{\ell + 1}}^M \in \langle h_{r_0} \cdots h_{r_{\ell}} \rangle$.  Thus $O(h_{r_{\ell + 1}}^M) \leq O(h_{r_0} \cdots h_{r_{\ell}}) \leq O(h_{r_0}) \cdots O(h_{r_{\ell}})$.  Since $O(h_{r_{\ell + 1}}) \geq (\ell + 3)O(h_{r_0}) \cdots O(h_{r_{\ell}})$, we see that $M \geq \ell + 3$.  Let $\overline{\epsilon}$ be given by $\epsilon_t = 1$ if and only if $t = r_{\ell}$ for some $\ell \in \mathbb{N}$.  Now for a given $\ell \in \mathbb{N}$ we have $$O(\phi(g_{0, \overline{\epsilon}})) \geq O(\pi_{j_{r_{\ell + 2} - 1}} \circ \phi(g_{0, \overline{\epsilon}})) = O(h_{r_0} \cdots h_{r_{\ell + 1}}) \geq \ell + 3.$$  So $\phi(g_{0, \overline{\epsilon}})$ has infinite order.

In the case where $\mathcal{G}$ is locally countably compact, the proof is only slightly distinct.  Given $V \in \Nb(\mathcal{G}, 1)$ we produce sequences

\begin{itemize}

\item $V_m$ of elements in $\Nb(\mathcal{G}, 1)$;

\item $g_m$ of elements in $X$;

\item $j_m$ of natural numbers;

\item $k_m$ of natural numbers.

\end{itemize}

Let $V_0 \in \Nb(\mathcal{G}, 1)$ be such that $\overline{V_0}$ is countably compact and $\overline{V_0} \subseteq V$.  Pick $g_0 \in V_0 \cap X$ with $O(\phi(g_0)) > 1$, pick $j_0$ such that $O(\pi_{j_0} \circ \phi(g_0)) > 1$, and let $k_0 = N_{j_0}!K$  (here $K$ is as in the completely metrizable case).  Select $V_1 \in \Nb(\mathcal{G}, 1)$ with $$\overline{V_1}^{k_0} \cup g_0\overline{V_1}^{k_0}\subseteq V_0.$$  Pick $g_1 \in V_1$ with $O(\phi(g_1^{k_0})) \geq 3$ and $j_1 > j_0$ with $O(\pi_{j_1} \circ \phi(g_1^{k_0})) \geq 3$ and let $k_1 = N_{j_1}!$.

Assuming that we have made the selections $V_0, \ldots, V_m$ and $g_0, \ldots, g_m$ and $j_0, \ldots, j_m$ and $k_0, \ldots, k_m$ we select $V_{m + 1} \in \Nb(\mathcal{G}, 1)$ so that $$ \overline{V_{m + 1}}^{k_m}  \cup g_m\overline{V_{m + 1}}^{k_m}  \subseteq V_m.$$  Select $g_{m+1} \in V_{m + 1} \cap X$ with $O(\phi(g_{m + 1}^{k_m \cdots k_0})) > m + 2$, pick $j_{m + 1} > j_m$ with $O(\pi_{j_{m + 1}} \circ \phi(g_{m + 1}^{k_m \cdots k_0})) > m + 2$ and let $k_{m + 1} = N_{j_{m + 1}}!$.  Note that for any sequence $\overline{\epsilon}$ of zeros and ones we have $$V \supseteq g_0^{\epsilon_0}\overline{V_0}^{k_0} \supseteq g_0^{\epsilon_0}(g_1^{\epsilon_1}\overline{V_1}^{k_1})^{k_0} \supseteq \cdots$$ and so we may select an element $g_{0, \overline{\epsilon}}$ in the (necessarily nonempty) intersection of this decreasing chain.  Evidently $g_{0, \overline{\epsilon}} \in V$, and arguing as in the completely metrizable case we have in fact that $g_{0, \overline{\epsilon}} \in V \cap G$.  Also, for an arbitrary $n \in \mathbb{N}$ there exists an element $g \in \overline{V_{n + 1}}$ such that $$g_{0, \overline{\epsilon}} = g_0^{\epsilon_0}(g_1^{\epsilon_1}(g_2^{\epsilon_2}(\cdots g_n^{\epsilon_n}(g)^{k_n} \cdots)^{k_2})^{k_1})^{k_0}$$ and therefore $$\pi_{j_n} \circ \phi(g_{0, \overline{\epsilon}}) = \pi_{j_n} \circ \phi(g_0^{\epsilon_0}(g_1^{\epsilon_1})^{k_0}(g_2^{\epsilon_2})^{k_1k_0} \cdots (g_n^{\epsilon_n})^{k_{n  - 1} \cdots k_0}).$$  Now one can obtain conclusions $(*)_1$, $(*)_2$, and $(*)_3$ precisely as before.

To see why $(*)_4$ holds we once again let $h_m = \phi(g_m^{k_{m-1}\cdots k_0})$ and consider cases.  If $h_m$ has infinite order, then the limit (in the parameter $n \geq m$) of $O(\pi_{j_n}(h_m))$ is $\infty$.  Letting $\overline{\epsilon}$ be $1$ precisely at $m$, for each $n \geq m$ we have $\pi_{j_n} \circ \phi(g_{0, \overline{\epsilon}}) = \pi_{j_n}(h_m)$ and therefore $\phi(g_{0, \overline{\epsilon}})$ has infinite order.  If $O(h_m)$ is finite for all $m \in \mathbb{N}$ then pick the increasing sequence $r_0 < r_1 < \cdots$ precisely as before.  The same argument yields that $O(h_{r_0} \cdots h_{r_{\ell + 1}}) \geq \ell + 3$.  Define $\overline{\epsilon}$ as before and once again $$O(\phi(g_{0, \overline{\epsilon}})) \geq O(\pi_{j_{r_{\ell + 2} - 1}} \circ \phi(g_{0, \overline{\epsilon}})) = O(h_{r_0} \cdots h_{r_{\ell + 1}}) \geq \ell + 3$$ for arbitrary $\ell \in\mathbb{N}$, so we are done.

\end{proof}

\begin{remark}
Note that dropping the requirement regarding the existence of a number $N_j \in \mathbb{N}$ for which $O(\pi_j\circ\phi(g)) \leq N_j$ allows the conclusion of Theorem \ref{main} to fail.  To see this, consider the homomorphic retraction $\phi: S^1 \rightarrow \mathbb{Q}/\Z$ of abelian groups.  Give $\mathbb{Q}/\Z$ the trivial inverse limit structure under which $\pi_{i+1, i}: \mathbb{Q}/\Z \rightarrow \mathbb{Q}/\Z$ is the identity isomorphism.  The domain of $\phi$ is the compact, abelian, completely metrizable topological group $S^1$.  The set $X = \mathbb{Q}/\Z \subseteq S^1$ has $\phi(X)$ generating an abelian group and for each $U \in \Nb(S^1, 1)$ we have $\sup\{O(\phi(g)) \mid g \in U \cap X\} = \infty$.  Clearly no $\overline{X}_V$ can ever be found for any $V \in \Nb(S^1, 1)$ as in the conclusion, since the codomain of $\phi$ is countable and torsion.  Of course, the infinite divisible group $\mathbb{Q}/\Z$ is not residually finite.
\end{remark}

We will now show how to use Theorem \ref{main} to prove slenderness properties.

\begin{definition}\label{defn: residually bounded torsion}
    Recall that a group $K$ is \emph{bounded torsion} if there exists $n \in \mathbb{N}$ such that $g^n = 1$ for all $g \in K$.  We will say a group $H$ is a \emph{residually (bounded torsion)} if for all $g \in H \setminus \{1\}$ there exists a normal subgroup $L \unlhd H$ with $g \notin L$ and $H/L$ bounded torsion. 
\end{definition}  

It is straightforward to see that if $L_1$ and $L_2$ are normal in $H$ and $H/L_j$ is bounded torsion for each $j$, then $H/(L_1 \cap L_2)$ is bounded torsion.  The following is \cite[Proposition 3.5]{CoCo}.

\begin{lemma}\label{fromCoCo}  If $\mathcal{G}$ is either completely metrizable or locally compact Hausdorff, $\phi: \mathcal{G} \rightarrow H$ is an abstract group homomorphism with range of size less than $2^{\aleph_0}$ then there exists $V \in \Nb(\mathcal{G}, 1)$ for which any $V \supseteq U \in \Nb(\mathcal{G}, 1)$ has $\phi(U) = \phi(V)$.
\end{lemma}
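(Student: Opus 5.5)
Suppose, for contradiction, that for every $V \in \Nb(\mathcal{G},1)$ there is some $U \in \Nb(\mathcal{G},1)$ with $U \subseteq V$ and $\phi(U) \subsetneq \phi(V)$. Using this, I would build a decreasing sequence $V_0 \supseteq V_1 \supseteq \cdots$ of identity neighborhoods and elements $g_n \in V_n$ with $\phi(g_n) \notin \phi(V_{n+1})$. I would then combine the $g_n$ along binary sequences $\overline{\epsilon}$, exactly as in the proof of Theorem \ref{main}, to produce $2^{\aleph_0}$ elements of $\mathcal{G}$ with pairwise distinct images. That contradicts $|\phi(\mathcal{G})| < 2^{\aleph_0}$.

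Concretely, in the completely metrizable case I would shrink the neighborhoods fast enough that every infinite product $g_0^{\epsilon_0} g_1^{\epsilon_1} g_2^{\epsilon_2}\cdots$ converges, with tails small in the metric. For this to be useful I also need the tails to land in the right sets: $t_{n,\overline{\epsilon}} := g_n^{\epsilon_n} g_{n+1}^{\epsilon_{n+1}}\cdots$ should lie in $V_n$. I would arrange this by choosing each $V_{n+1}$ with $V_{n+1}V_{n+1} \subseteq V_n$ and also inside the metric ball of radius $3^{-n}$. Then $x_{\overline{\epsilon}} = g_0^{\epsilon_0}\cdots g_{n-1}^{\epsilon_{n-1}}\, t_{n,\overline{\epsilon}}$.

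To separate $\overline{\epsilon}$ from $\overline{\epsilon'}$, let $n$ be the first index where they differ, say $\epsilon_n = 1$ and $\epsilon'_n = 0$. If $\phi(x_{\overline{\epsilon}}) = \phi(x_{\overline{\epsilon'}})$, then $\phi(g_n)\phi(t_{n+1,\overline{\epsilon}}) = \phi(t_{n+1,\overline{\epsilon'}})$. This gives $\phi(g_n) \in \phi(V_{n+1})\phi(V_{n+1})^{-1}$, which is not quite $\phi(V_{n+1})$.

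This mismatch is the main obstacle, and I would fix it at the choice of $g_n$. Pick $W_{n+1}$ symmetric with $W_{n+1}W_{n+1} \subseteq U$, where $U \subseteq V_n$ is the neighborhood supplied by the negated hypothesis with $\phi(U) \ne \phi(V_n)$. Then choose $g_n \in V_n$ with $\phi(g_n) \notin \phi(U)$, and set $V_{n+1} = W_{n+1}$ intersected with a small metric ball. Now $\phi(V_{n+1})\phi(V_{n+1})^{-1} \subseteq \phi(U)$, so equality of images would force $\phi(g_n) \in \phi(U)$, a contradiction.

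I also need the infinite tails to lie in $V_{n+1}$ rather than merely in its closure. I would handle this by taking $V_{n+1}$ with $\overline{V_{n+1}}\,\overline{V_{n+1}} \subseteq U$, and by choosing the later neighborhoods so that each tail lies in $\overline{V_{n+1}}$. This works because each finite partial tail lies in $V_{n+1}$, by the nesting $V_{m+1}V_{m+1} \subseteq V_m$.

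In the locally compact Hausdorff case I would argue the same way, replacing the metric convergence by nested compact sets, as in the second half of the proof of Theorem \ref{main}. Choose $\overline{V_0}$ compact, and require $g_n^{\epsilon_n}\overline{V_{n+1}} \subseteq V_n$ and $\overline{V_{n+1}}\,\overline{V_{n+1}}^{-1} \subseteq U_n$. Then pick $x_{\overline{\epsilon}}$ in the nonempty intersection of the decreasing compact sets $g_0^{\epsilon_0}\cdots g_n^{\epsilon_n}\overline{V_{n+1}}$. The tail after stage $n$ then lies in $\overline{V_{n+1}}$, and the same separation argument shows $\overline{\epsilon} \mapsto \phi(x_{\overline{\epsilon}})$ is injective. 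This gives $2^{\aleph_0}$ distinct values in $\phi(\mathcal{G})$, contradicting $|\phi(\mathcal{G})| < 2^{\aleph_0}$.
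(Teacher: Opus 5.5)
Your architecture is right, and it is the Cantor-scheme idea the paper uses in the proof of Theorem \ref{main}; for this lemma itself the paper just cites \cite[Proposition 3.5]{CoCo}. You pick $U_n\subseteq V_n$ with $\phi(U_n)\subsetneq\phi(V_n)$ and $g_n\in V_n$ with $\phi(g_n)\notin\phi(U_n)$. You then force the tail after stage $n$ into a set $S$ with $SS^{-1}\subseteq U_n$, and separate at the first coordinate where $\overline{\epsilon}$ and $\overline{\epsilon'}$ differ. Your locally compact paragraph is correct: the condition $g_n^{\epsilon_n}\overline{V_{n+1}}\subseteq V_n$ puts $(g_0^{\epsilon_0}\cdots g_n^{\epsilon_n})^{-1}x_{\overline{\epsilon}}$ exactly in $\overline{V_{n+1}}$. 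The completely metrizable paragraph, however, has two steps that fail as written.

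\textbf{Convergence.} Placing $V_{n+1}$ in the $3^{-n}$-ball about $1$, together with $V_{n+1}V_{n+1}\subseteq V_n$, does not make $p_N=g_0^{\epsilon_0}\cdots g_N^{\epsilon_N}$ Cauchy. A complete compatible metric need not be left-invariant, so you only get $p_N^{-1}p_M\to 1$. For example, in $\mathrm{Sym}(\mathbb{N})$ with its usual complete metric, let $V_n$ be pointwise stabilizers of long initial segments and $g_n=(a_n\;a_{n+1})$ for suitable increasing $a_n$. Then $g_0g_1\cdots g_N$ has no limit in the group; pointwise it tends to the non-surjective map $a_k\mapsto a_{k+1}$. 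You need the bookkeeping from Theorem \ref{main}: choose $V_{n+1}$ after $g_0,\ldots,g_n$ are known, so that $d(pg,p)<3^{-n}$ for all $g\in V_{n+1}$ and each of the $2^{n+1}$ prefixes $p$.

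\textbf{Tail containment.} $V_{m+1}V_{m+1}\subseteq V_m$ with $g_m\in V_m$ only places a finite tail $g_{n+1}^{\epsilon_{n+1}}\cdots g_N^{\epsilon_N}$ in $V_n$, not in $V_{n+1}$. Already in $\mathbb{R}$ with $V_m=(-2^{-m},2^{-m})$, two positive terms near the right endpoints leave $V_{n+1}$. Since $U_n\subseteq V_n$, the separation step then no longer forces $\phi(g_n)\in\phi(U_n)$.

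\textbf{Fix.} Treat the metric case exactly like your compact case. Require $g_n^{\epsilon}\overline{V_{n+1}}\subseteq V_n$ for $\epsilon\in\{0,1\}$, $\overline{V_{n+1}}\,\overline{V_{n+1}}^{-1}\subseteq U_n$, and $\operatorname{diam}(p\overline{V_{n+1}})<2^{-n}$ for each of the $2^{n+1}$ prefixes $p$. Then completeness (Cantor's intersection theorem) replaces compactness and produces $x_{\overline{\epsilon}}\in\bigcap_n g_0^{\epsilon_0}\cdots g_n^{\epsilon_n}\overline{V_{n+1}}$.
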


We point out that the analogue of Lemma \ref{fromCoCo} also holds for locally countably compact Hausdorff topological groups, using a completely analogous argument.  We state it without proof.

\begin{lemma}\label{Artinianbutforcountably}  If $\mathcal{G}$ is locally countably compact Hausdorff, $\phi: \mathcal{G} \rightarrow H$ is an abstract group homomorphism with range of size less than $2^{\aleph_0}$ then there exists $V \in \Nb(\mathcal{G}, 1)$ for which any $V \supseteq U \in \Nb(\mathcal{G}, 1)$ has $\phi(U) = \phi(V)$.
\end{lemma}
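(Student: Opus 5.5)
We need to prove Lemma \ref{Artinianbutforcountably}: if $\mathcal{G}$ is locally countably compact Hausdorff and $|\phi(\mathcal{G})| < 2^{\aleph_0}$, then the images $\phi(U)$ stabilize on small neighborhoods. The plan is to argue by contradiction, mimicking the proof of \cite[Proposition 3.5]{CoCo}, with countable compactness of closures replacing completeness.

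Suppose no such $V$ exists. Fix $V_0 \in \Nb(\mathcal{G},1)$ with $\overline{V_0}$ countably compact. The failure of the conclusion says that for every open neighborhood $W$ of $1$ there is a smaller $W' \in \Nb(\mathcal{G},1)$ with $\phi(W') \subsetneq \phi(W)$. We want a tree-like construction. Inductively choose $V_{m+1} \in \Nb(\mathcal{G},1)$ and $g_m \in V_m$ so that the following hold:
\begin{enumerate}
\item $\phi(g_m) \notin \phi(V_{m+1})$;
\item $\overline{V_{m+1}}\,\overline{V_{m+1}} \subseteq V_m$ and $g_m\overline{V_{m+1}} \subseteq V_m$.
\end{enumerate}
For (1), use the failure hypothesis to pick $W \subseteq V_m$ with $\phi(W) \neq \phi(V_m)$. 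Then take $g_m \in V_m$ with $\phi(g_m) \notin \phi(W)$, and shrink $W$ further to obtain (2) via continuity of multiplication and regularity.

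For each $\overline{\epsilon} \in \{0,1\}^{\mathbb{N}}$, the sets $F_n = g_0^{\epsilon_0}\cdots g_n^{\epsilon_n}\overline{V_{n+1}}$ are nonempty and closed. They are decreasing, since $g_{n+1}^{\epsilon_{n+1}}\overline{V_{n+2}} \subseteq V_{n+1}$. They are contained in $\overline{V_0}$, because a telescoping argument using (2) gives $g_0^{\epsilon_0}\cdots g_n^{\epsilon_n}\overline{V_{n+1}} \subseteq V_0$. By countable compactness, the intersection of the $F_n$ is nonempty; pick $x_{\overline{\epsilon}}$ in it.

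For distinct $\overline{\epsilon}, \overline{\epsilon'}$, let $n$ be the first index where they differ, say $\epsilon_n = 1$ and $\epsilon'_n = 0$. Write $p = g_0^{\epsilon_0}\cdots g_{n-1}^{\epsilon_{n-1}}$. Then
$$x_{\overline{\epsilon}} = p g_n u, \qquad x_{\overline{\epsilon'}} = p u'$$
with $u, u' \in \overline{V_{n+1}}$. If the two images were equal, we would get $\phi(g_n) = \phi(u' u^{-1})$. This is the main obstacle: we must arrange that $\phi(u'u^{-1}) \in \phi(V_{n+1})$. The fix is to strengthen (1) and (2) so that $\overline{V_{m+1}}\,\overline{V_{m+1}}^{-1} \subseteq W$, where $W$ is chosen with $\phi(g_m) \notin \phi(W)$. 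Then $u'u^{-1} \in W$, so $\phi(g_m) \neq \phi(u'u^{-1})$, a contradiction. Hence $\overline{\epsilon} \mapsto \phi(x_{\overline{\epsilon}})$ is injective, giving $|\phi(\mathcal{G})| \geq 2^{\aleph_0}$, which contradicts the hypothesis.

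The remaining routine checks are two: that the nested closed sets lie inside the countably compact $\overline{V_0}$ (which gives nonempty intersection of a decreasing sequence of nonempty closed sets), and that the symmetric neighborhoods can be chosen using Hausdorff regularity of topological groups.
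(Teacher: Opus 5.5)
Your proof is correct and is essentially the argument the paper has in mind. The paper omits the proof, calling it completely analogous to \cite[Proposition 3.5]{CoCo}. Your Cantor scheme is exactly that analogue: nested closed translates $g_0^{\epsilon_0}\cdots g_n^{\epsilon_n}\overline{V_{n+1}}$ inside the countably compact $\overline{V_0}$, with $\overline{V_{n+1}}\,\overline{V_{n+1}}^{-1}$ kept inside a neighborhood whose image misses $\phi(g_n)$; compare the locally countably compact case in the proof of Theorem~\ref{main}.
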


\begin{theorem}\label{rebtorsion}  Suppose $\phi: \mathcal{G} \rightarrow H$ is an abstract group homomorphism with

\begin{itemize}
\item $\mathcal{G}$ completely metrizable or locally countably compact Hausdorff; and

\item $H$ residually (bounded torsion) with $|H| < 2^{\aleph_0}$.
\end{itemize}

\noindent Then there exists $V \in \Nb(\mathcal{G}, 1)$ for which

\begin{enumerate}

\item[$(\dagger)_1$]  $\phi(V)$ is a subgroup of $H$; and

\item[$(\dagger)_2$] every abelian subgroup of $\phi(V)$ is bounded torsion

\end{enumerate}

\noindent and so in particular $\phi(V)$ is a torsion group.

\end{theorem}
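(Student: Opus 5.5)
We will get $(\dagger)_1$ from Lemma \ref{fromCoCo} (or Lemma \ref{Artinianbutforcountably} in the locally countably compact case). Since $|\phi(\mathcal{G})| \leq |H| < 2^{\aleph_0}$, there is $V_0 \in \Nb(\mathcal{G}, 1)$ such that $\phi(U) = \phi(V_0)$ for every $V_0 \supseteq U \in \Nb(\mathcal{G},1)$.

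Let $V_1 \subseteq V_0$ be a symmetric open neighborhood with $V_1 V_1 \subseteq V_0$. Then
$$\phi(V_0)\phi(V_0) = \phi(V_1)\phi(V_1) = \phi(V_1V_1) \subseteq \phi(V_0),$$
and $\phi(V_0)^{-1} = \phi(V_1^{-1}) = \phi(V_1) = \phi(V_0)$. So $\phi(V_0)$ is a subgroup. Any smaller neighborhood of $1$ has the same image, so we may shrink $V_0$ freely; we set $V = V_0$.

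For $(\dagger)_2$ we argue by contradiction using Theorem \ref{main}. Suppose $A \leq \phi(V)$ is an abelian subgroup that is not bounded torsion. We first put $H$ into the form of an inverse limit.

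\begin{itemize}
\item For each $h \in H\setminus\{1\}$ pick $L_h \unlhd H$ with $h \notin L_h$ and $H/L_h$ bounded torsion.
\item Since $|H| < 2^{\aleph_0}$ this family need not be countable. We therefore restrict to a countable subgroup: choose a countable $C \subseteq A$ whose elements have unbounded orders, or which contains an element of infinite order, and use only the countably many $L_h$ with $h \in \langle C\rangle\setminus\{1\}$.
\item Enumerate these as $L^{(1)}, L^{(2)}, \ldots$. Put $M_i = L^{(1)} \cap \cdots \cap L^{(i)}$ and $H_i = H/M_i$. Each $H_i$ is bounded torsion by the remark after Definition \ref{defn: residually bounded torsion}, say with exponent $N_i$.
\item The natural map $\psi\colon H \to \varprojlim H_i$ is then a homomorphism that is injective on $\langle C\rangle$.
\end{itemize}

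Apply Theorem \ref{main} with $G = \mathcal{G}$ (index $1$), homomorphism $\psi\circ\phi$, and the $N_j$ above, which bound orders of $\pi_j \circ \psi\circ\phi(g)$. Let $X = \phi^{-1}(C) \cap V$. Then $\psi\phi(X) \subseteq \psi(A)$ generates an abelian group.

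The key hypothesis is unboundedness of orders in every $U \in \Nb(\mathcal{G},1)$. Since $\phi(U\cap V) = \phi(V) \supseteq C$, every element of $C$ has a preimage in $U \cap X$. Orders are preserved by injectivity of $\psi$ on $\langle C \rangle$, so $\sup\{O(\psi\phi(g)) \mid g \in U\cap X\} = \infty$.

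Theorem \ref{main} then yields $\overline{X}_V \subseteq V$ with $|\overline{X}_V| = 2^{\aleph_0}$ on which $\psi\circ\phi$ is injective. Hence $|\phi(V)| \geq 2^{\aleph_0}$, contradicting $|H| < 2^{\aleph_0}$.

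The final clause follows because every cyclic subgroup of $\phi(V)$ is abelian, hence bounded torsion, hence finite.

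The main obstacle is the inverse-limit reduction: choosing countably many normal subgroups so that $\psi$ preserves the unbounded orders (or the infinite order) on the relevant elements. Intersecting the $L_h$ only over the countable subgroup $\langle C\rangle$ handles this. If $A$ contains an element $a$ of infinite order, take $C = \{a^n\}$. The $L_h$ for the nontrivial powers then force the orders of images to grow, and the infinite-order case causes no extra difficulty.
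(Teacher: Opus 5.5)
Your proof is correct and follows essentially the paper's route: you take $V$ from Lemma \ref{fromCoCo} (or Lemma \ref{Artinianbutforcountably}), pass to a countable inverse system of bounded-torsion quotients of $H$, and apply Theorem \ref{main} to produce $2^{\aleph_0}$ distinct values in $\phi(V)$, a contradiction. The only difference is cosmetic. The paper takes commuting $a_i$ with $O(a_i)\ge i+2$ and, for each $i$, a single $L_i$ missing $a_i,\dots,a_i^{i+1}$, whereas you make $\psi$ injective on the whole countable subgroup $\langle C\rangle$. Both preserve the unbounded orders that Theorem \ref{main} needs.
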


\begin{proof}
Assume the hypotheses.  Let $V \in \Nb(\mathcal{G}, 1)$ be as in the conclusion of Lemma \ref{fromCoCo} (or of Lemma \ref{Artinianbutforcountably}).  It is easy to check that $\phi(V)$ is a subgroup.  Suppose for contradiction that there exists a sequence of pairwise commuting elements $a_0, a_1, \ldots$ in $\phi(V)$ for which $O(a_i) \geq i + 2$.  Pick $L_i \unlhd H$ for which $a_i, a_i^2, \ldots, a_i^{i + 1} \notin L_i$ and $H/L_i$ is bounded torsion.  Define $\overline{L_i} = L_0 \cap L_1 \cap \cdots \cap L_i$, $H_i = H/\overline{L_i}$ and let $\pi_{i + 1, i}: H_{i + 1} \rightarrow H_i$ be the natural map.  Let $\pi: H \rightarrow \varprojlim H_i$ be the natural homomorphism,   choose $N_i$ such that $O(g)\leq N_i$ for all  $g\in H_i$, $X = \phi^{-1}(\{a_i\}_{i \in \mathbb{N}})$.  Now the homomorphism $\pi \circ \phi: \mathcal{G} \rightarrow \varprojlim H_i$ contradicts the conclusion of Theorem \ref{main} (since $|\pi \circ \phi(V)| \leq |\phi(V)| \leq |H| < 2^{\aleph_0}$).

\end{proof}

We note that some of the hypotheses and conclusions of Theorem \ref{rebtorsion} have similar flavor to the main result of \cite{KMV} (they have $\mathcal{G}$ being locally compact Hausdorff, in which setting one has more generous structure theorems).

\begin{definition}
    A group $H$ is \emph{cm-slender} (respectively \emph{lccH-slender}) if every abstract group homomorphism from a completely metrizable (respectively locally countably compact Hausdorff) topological group to $H$ has open kernel.  
\end{definition}

Clearly a locally compact Hausdorff group is locally countably compact Hausdorff (and interestingly the reverse implication does not hold \cite{HvRS}).  Thus lccH-slender implies lcH-slender.  The following is immediate.

\begin{corollary}\label{torsionfree cm-slender}  If $H$ is a torsion-free, residually (bounded torsion) group with $|H| < 2^{\aleph_0}$ then $H$ is cm-slender and lccH-slender.  In particular the conclusion holds for $H$ a torsion-free, residually finite group of size less than $2^{\aleph_0}$.
\end{corollary}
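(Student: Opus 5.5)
Let $\phi: \mathcal{G} \rightarrow H$ be an abstract group homomorphism, where $\mathcal{G}$ is completely metrizable or locally countably compact Hausdorff and $H$ is torsion-free, residually (bounded torsion), with $|H| < 2^{\aleph_0}$. The plan is to apply Theorem \ref{rebtorsion} directly. It provides $V \in \Nb(\mathcal{G}, 1)$ such that $\phi(V)$ is a subgroup of $H$ and every abelian subgroup of $\phi(V)$ is bounded torsion; in particular $\phi(V)$ is a torsion group.

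Since $H$ is torsion-free, the torsion subgroup $\phi(V)$ must be trivial. Hence $V \subseteq \ker(\phi)$. A subgroup containing an open neighborhood of the identity is open, because it is the union of the translates $gV$ for $g$ in the subgroup. So $\ker(\phi)$ is open. As $\mathcal{G}$ and $\phi$ were arbitrary in the two classes of domains, $H$ is both cm-slender and lccH-slender.

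For the ``in particular'' clause, it remains to note that a residually finite group is residually (bounded torsion). Given $g \neq 1$, choose a normal subgroup $L$ of finite index with $g \notin L$. Then $H/L$ is finite, so it satisfies $x^{|H/L|} = 1$ for all $x$, and is therefore bounded torsion.

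There is no real obstacle here. All the substance lies in Theorem \ref{rebtorsion}, which in turn relies on Theorem \ref{main} and Lemma \ref{fromCoCo}/Lemma \ref{Artinianbutforcountably}. The only points to verify are that ``torsion and torsion-free implies trivial'' and that an open-neighborhood-containing subgroup is open, and both are immediate.
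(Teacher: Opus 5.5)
Your proof is correct and is exactly the argument the paper has in mind when it calls this corollary immediate from Theorem \ref{rebtorsion}. Torsion-freeness forces $\phi(V)=\{1\}$, so $\ker(\phi)\supseteq V$ is open, and residually finite groups are residually (bounded torsion) because a finite quotient $H/L$ satisfies $x^{|H/L|}=1$.
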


We point out that Theorem \ref{main} has natural companion results when the topological group $\mathcal{G}$ is replaced with the fundamental group $\mathbb{H}$ of the infinite earring.  Recall that $\mathbb{H}$ has a combinatorial description which essentially renders $\mathbb{H}$ as a group of words (of up to countable length) using letters in a countable alphabet $\{b_m^{\pm 1}\}_{m = 0}^{\infty}$ where each letter gets used only finitely often.  For each $s \in \mathbb{N}$ we have a subgroup $\mathbb{H}^s$ consisting of those words using only letters in $\{b_m^{\pm 1}\}_{m = s}^{\infty}$.  See \cite{CaCo1} for more details and background.

\begin{theorem}\label{mainforH}  Suppose that

\begin{itemize}

\item $G$ is a finite index subgroup of $\mathbb{H}$

\item $\{\pi_{i + 1, i}: H_{i + 1} \rightarrow H_i \mid i \in \mathbb{N}\}$ is a collection of group homomorphisms;

\item $\phi: G \rightarrow \varprojlim H_i$ is an abstract group homomorphism;

\item for each $j \in \mathbb{N}$ there is a natural number $N_j \in \mathbb{N}$ such that for all $g \in G$, $O(\pi_j \circ \phi(g)) \leq N_j$;

\item there is a subset $X \subseteq G$ such that $\phi(X)$ generates an abelian group and for each $s \in \mathbb{N}$ we have $\sup\{O(\phi(g)) \mid g \in \mathbb{H}^s \cap X\} = \infty$.

\end{itemize}

\noindent Then for each $s \in \mathbb{N}$ there is a subset $\overline{X}_s \subseteq \mathbb{H}^s \cap G$ such that

\begin{enumerate}

\item[$(*)_1$]  $|\overline{X}_s| = 2^{\aleph_0}$;

\item[$(*)_2$] $\phi \upharpoonright \overline{X}_s$ is injective;

\item[$(*)_3$] $\phi(\overline{X}_s)$ generates an abelian group;

\item[$(*)_4$] $\phi(\overline{X}_s)$ contains an element of infinite order.
\end{enumerate}

\end{theorem}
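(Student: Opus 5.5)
We mimic the proof of Theorem \ref{main}, with the shrinking neighborhoods $V_m$ replaced by the subgroups $\mathbb{H}^{s_m}$ and the metric limits replaced by infinite concatenation of words.

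Fix $s$. Choose $K \geq 1$ with $g^K \in G$ for every $g \in \mathbb{H}$; for instance take $K = |\mathbb{H}:G|!$, noting that $G$ then contains a normal finite index subgroup whose quotient has exponent dividing $K$.

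We build inductively:
\begin{itemize}
\item an increasing sequence $s = s_0 < s_1 < \cdots$;
\item elements $g_m \in \mathbb{H}^{s_m} \cap X$;
\item indices $j_0 < j_1 < \cdots$;
\item integers $k_0 = N_{j_0}! K$ and $k_m = N_{j_m}!$ for $m \geq 1$.
\end{itemize}
The requirements are $O(\pi_{j_m}\circ\phi(g_m^{k_{m-1}\cdots k_0})) > m+1$, and that $s_{m+1}$ is so large that no letter $b_t$ with $t \geq s_{m+1}$ occurs in $g_0,\dots,g_m$.

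The existence of $g_{m+1}$ needs a small argument. We need $O(\phi(g^{k_m\cdots k_0}))$ unbounded over $g \in \mathbb{H}^{s_{m+1}}\cap X$. This follows from the hypothesis, since $O(x^k) \geq O(x)/k$ when $O(x) < \infty$, and an element of infinite order stays of infinite order under powers. Then choose $j_{m+1}$ large, exactly as in Theorem \ref{main}.

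Next, for each $0$-$1$ sequence $\overline{\epsilon}$ we define
$$g_{m,\overline{\epsilon}} = g_m^{\epsilon_m}\bigl(g_{m+1,\overline{\epsilon}}\bigr)^{k_m}.$$
This is the step I expect to be the main obstacle. We realize it as an honest infinite word: expand the nested expression $g_0^{\epsilon_0}(g_1^{\epsilon_1}(g_2^{\epsilon_2}(\cdots)^{k_2})^{k_1})^{k_0}$ into a countable linear order of letters, using the standard infinite-word combinatorics of $\mathbb{H}$ (see \cite{CaCo1}).

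Each letter $b_t$ occurs only in those $g_m$ with $s_m \leq t$. There are finitely many such $m$, and each contributes finitely many copies even after the multiplicities $k_m k_{m-1}\cdots k_0$. So every letter is used finitely often, and the resulting word is a legitimate element of $\mathbb{H}^s$.

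One should verify that this word is well-defined up to reduction. One should also verify the identity $g_{0,\overline{\epsilon}} = g_0^{\epsilon_0}(\cdots g_n^{\epsilon_n}(g_{n+1,\overline{\epsilon}})^{k_n}\cdots)^{k_0}$ in $\mathbb{H}$ for each $n$, where $g_{n+1,\overline{\epsilon}} \in \mathbb{H}^{s_{n+1}}$ is the analogous tail word. This is a direct concatenation check on the word level. The orderings agree because concatenation and powers of infinite words are computed by ordered sums of the letter orders.

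Given these identities, the rest is verbatim as in Theorem \ref{main}.
\begin{itemize}
\item $g_{0,\overline{\epsilon}} \in G$, since $g_0^{\epsilon_0} \in G$ and $(g_{1,\overline{\epsilon}})^{k_0}$ is a $K$-th power.
\item We have
$$\pi_{j_n}\circ\phi(g_{0,\overline{\epsilon}}) = \pi_{j_n}\circ\phi\bigl(g_0^{\epsilon_0}(g_1^{\epsilon_1})^{k_0}\cdots(g_n^{\epsilon_n})^{k_{n-1}\cdots k_0}\bigr).$$
This holds because $\pi_{j_n}\circ\phi$ kills $k_n$-th powers, and the $\phi(g_m)$ commute.
\item Comparing two distinct sequences at their first differing index gives injectivity and cardinality $2^{\aleph_0}$.
\item Projectionwise commutation gives $(*)_3$.
\item The choice of the subsequence $r_\ell$ with orders growing multiplicatively gives $(*)_4$, by the same argument as before.
\end{itemize}
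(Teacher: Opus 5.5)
Your strategy is the one the paper intends; the paper only says the proof is a simplified version of that of Theorem~\ref{main}. You replace $V_m$ by $\mathbb{H}^{s_m}$ and the Cauchy limits by the nested infinite word. That word is a legitimate element of $\mathbb{H}^s$ because $g_m\in\mathbb{H}^{s_m}$ and $s_m\to\infty$, so the step you flagged as the main obstacle is fine. Two points in your write-up still need repair.

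\textbf{The condition on $s_{m+1}$.} You require that no letter $b_t$ with $t\ge s_{m+1}$ occur in $g_0,\dots,g_m$. This cannot be met in general. Each $g_m$ is an arbitrary element of $X\cap\mathbb{H}^{s_m}$, and a typical element of $\mathbb{H}$, such as $b_sb_{s+1}b_{s+2}\cdots$, is an infinite word involving letters of arbitrarily large index. Nothing in the hypotheses lets you choose finite words from $X$. You never actually use this condition: your count showing that each $b_t$ occurs only finitely often uses only $g_m\in\mathbb{H}^{s_m}$ with $s_m$ strictly increasing. So simply delete it.

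\textbf{Applying $\phi$ outside $G$.} When you say ``$\pi_{j_n}\circ\phi$ kills $k_n$-th powers'', you apply $\phi$ to $g_{n+1,\overline{\epsilon}}^{k_n}$. To peel the nested product apart you also implicitly apply it to the intermediate elements $g_{m,\overline{\epsilon}}$ for $1\le m\le n$. But $\phi$ is only defined on $G$, and with $k_m=N_{j_m}!$ for $m\ge 1$ you only know that $g_{0,\overline{\epsilon}}\in G$. The paper's proof of Theorem~\ref{main} makes the same choice of $k_m$ and passes over the same point, so you have inherited it.

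The fix is to take $k_m=N_{j_m}!\,K$ for every $m$. Then every $g_{m,\overline{\epsilon}}=g_m^{\epsilon_m}(g_{m+1,\overline{\epsilon}})^{k_m}$ lies in $G$. Moreover $x^{k_m}=(x^K)^{N_{j_m}!}\in\ker(\pi_{j_m}\circ\phi)$ for every $x\in\mathbb{H}$. Hence $\pi_{j_n}\circ\phi(g_{n,\overline{\epsilon}})=\pi_{j_n}\circ\phi(g_n^{\epsilon_n})$, and for $m<n$ we have $\pi_{j_n}\circ\phi(g_{m,\overline{\epsilon}})=\pi_{j_n}\circ\phi(g_m^{\epsilon_m})\,\bigl(\pi_{j_n}\circ\phi(g_{m+1,\overline{\epsilon}})\bigr)^{k_m}$. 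Descending induction on $m$ then gives your displayed formula for $\pi_{j_n}\circ\phi(g_{0,\overline{\epsilon}})$. Nothing else in your argument changes, including the choice of the $g_m$ and the $(*)_4$ argument.
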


The proof is a simplified version of that of Theorem \ref{main}.  Since an appropriate analogue of Lemma \ref{fromCoCo} holds for $\mathbb{H}$ (see \cite[Theorem 4.4]{CaCo2} and its surrounding discussion) we also have the following two consequences to Theorem \ref{mainforH}.

\begin{theorem}\label{rebtorsionforH}  Suppose $\phi: \mathbb{H} \rightarrow H$ is an abstract group homomorphism with $H$ residually (bounded torsion) with $|H| < 2^{\aleph_0}$.  Then there exists $s \in \mathbb{N}$ for which every abelian subgroup of $\phi(\mathbb{H}^s) \leq H$ is bounded torsion (so in particular $\phi(\mathbb{H}^s)$ is a torsion group).
\end{theorem}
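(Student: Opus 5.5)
The plan is to mirror the proof of Theorem \ref{rebtorsion}, replacing Lemma \ref{fromCoCo} by its analogue for $\mathbb{H}$ and Theorem \ref{main} by Theorem \ref{mainforH}. By the analogue of Lemma \ref{fromCoCo} for $\mathbb{H}$ (\cite[Theorem 4.4]{CaCo2} and its surrounding discussion), since $|\phi(\mathbb{H})| \leq |H| < 2^{\aleph_0}$, there is $s \in \mathbb{N}$ such that $\phi(\mathbb{H}^{t}) = \phi(\mathbb{H}^s)$ for all $t \geq s$. Each $\mathbb{H}^s$ is a subgroup of $\mathbb{H}$, so $\phi(\mathbb{H}^s)$ is automatically a subgroup of $H$.

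Next, suppose for contradiction that $\phi(\mathbb{H}^s)$ has an abelian subgroup that is not bounded torsion. Then it contains pairwise commuting elements $a_0, a_1, \ldots$ with $O(a_i) \geq i+2$ (either a single element of infinite order and its powers, or elements of unboundedly large finite orders). As in the proof of Theorem \ref{rebtorsion}, use residual (bounded torsion) to choose normal subgroups $L_i \unlhd H$ with $a_i, a_i^2, \ldots, a_i^{i+1} \notin L_i$ and $H/L_i$ bounded torsion. Then set:
\begin{itemize}
\item $\overline{L_i} = L_0 \cap \cdots \cap L_i$ and $H_i = H/\overline{L_i}$, which is bounded torsion by the remark after Definition \ref{defn: residually bounded torsion};
\item $N_i$ to be a bound on orders in $H_i$;
\item $\pi: H \rightarrow \varprojlim H_i$ to be the natural map;
\item $X = \phi^{-1}(\{a_i\}_{i \in \mathbb{N}}) \cap \mathbb{H}^s$.
\end{itemize}

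Now apply Theorem \ref{mainforH} with $G = \mathbb{H}$ to $\pi \circ \phi$. The group $\pi\circ\phi(X)$ lies in the image of the abelian group $\langle a_i\rangle$, so it generates an abelian group. The main point to verify is the hypothesis that $\sup\{O(\pi\circ\phi(g)) \mid g \in \mathbb{H}^t \cap X\} = \infty$ for every $t$.

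\begin{itemize}
\item For $t \leq s$ this is clear, since $\mathbb{H}^t \cap X = X$.
\item For $t \geq s$, the choice of $s$ gives $\phi(\mathbb{H}^t) = \phi(\mathbb{H}^s)$, so every $a_i$ has a preimage in $\mathbb{H}^t \cap X$.
\item Moreover $O(\pi(a_i)) \geq O(\pi_i(a_i)) \geq i+1$, because $a_i^k \notin \overline{L_i}$ for $1 \leq k \leq i+1$.
\end{itemize}

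Theorem \ref{mainforH} then yields $\overline{X}_s \subseteq \mathbb{H}^s$ with $\pi\circ\phi$ injective on a set of size $2^{\aleph_0}$. This is impossible, since $|\pi\circ\phi(\mathbb{H}^s)| \leq |H| < 2^{\aleph_0}$.

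The only delicate step is using the stabilization $\phi(\mathbb{H}^t)=\phi(\mathbb{H}^s)$ to get preimages of the $a_i$ in every deeper $\mathbb{H}^t$. Everything else follows Theorem \ref{rebtorsion} verbatim. Finally, the torsion claim follows because a cyclic subgroup generated by an element of infinite order would be an abelian subgroup that is not bounded torsion.
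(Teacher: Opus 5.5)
Your proposal is correct and is essentially the argument the paper intends. The paper obtains this theorem directly from Theorem \ref{mainforH} and the $\mathbb{H}$-analogue of Lemma \ref{fromCoCo}, running the proof of Theorem \ref{rebtorsion} with the stabilized subgroup $\mathbb{H}^s$ in place of $V$; your check that stabilization puts preimages of every $a_i$ into each $\mathbb{H}^t\cap X$ is exactly the step needed (the bound $O(\pi_i(\pi(a_i)))\geq i+1$ could even be $i+2$, which is immaterial).
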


\begin{definition}
    A group $H$ is \emph{n-slender} if for every abstract homomorphism $\phi: \mathbb{H} \rightarrow H$ there exists $s \in \mathbb{N}$ such that $\ker(\phi) \geq \mathbb{H}^s$.
\end{definition} 

\begin{corollary}\label{torsionfree n-slender}  If $H$ is a torsion-free, residually (bounded torsion) group with $|H| < 2^{\aleph_0}$, then $H$ is n-slender.  In particular the conclusion holds for $H$ a torsion-free, residually finite group of size less than $2^{\aleph_0}$.
\end{corollary}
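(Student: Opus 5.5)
The plan is to deduce this from Theorem \ref{rebtorsionforH}, in the same way Corollary \ref{torsionfree cm-slender} follows from Theorem \ref{rebtorsion}. Let $\phi: \mathbb{H} \rightarrow H$ be an abstract homomorphism, where $H$ is torsion-free, residually (bounded torsion), and $|H| < 2^{\aleph_0}$. Theorem \ref{rebtorsionforH} supplies an $s \in \mathbb{N}$ such that every abelian subgroup of $\phi(\mathbb{H}^s)$ is bounded torsion. In particular $\phi(\mathbb{H}^s)$ is a torsion group: for any $h \in \phi(\mathbb{H}^s)$, the cyclic group $\langle h \rangle$ is abelian, hence bounded torsion, so $h$ has finite order.

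Since $H$ is torsion-free, every element of the torsion subgroup $\phi(\mathbb{H}^s)$ is trivial. Hence $\phi(\mathbb{H}^s) = \{1\}$, that is, $\ker(\phi) \geq \mathbb{H}^s$. This is exactly n-slenderness.

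For the ``in particular'' clause, note that every finite group is bounded torsion (take the exponent $|K|!$). So a residually finite group is residually (bounded torsion), and the first part applies.

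There is no real obstacle here, since all the weight rests on Theorem \ref{rebtorsionforH}. That theorem in turn relies on Theorem \ref{mainforH} together with the analogue of Lemma \ref{fromCoCo} for $\mathbb{H}$ from \cite{CaCo2}, both of which I am taking as given. The one point to check is that the conclusion of Theorem \ref{rebtorsionforH} concerns the image $\phi(\mathbb{H}^s)$ itself rather than only some subgroup of it. That is what the statement gives, so the argument is complete.
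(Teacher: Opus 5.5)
Your proposal is correct and matches the paper's argument: Theorem \ref{rebtorsionforH} makes $\phi(\mathbb{H}^s)$ a torsion subgroup of the torsion-free group $H$, so it is trivial and $\ker(\phi) \geq \mathbb{H}^s$. The residually finite case follows because finite groups have bounded exponent.
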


\end{section}

\begin{section}{Applications to ultraproduct images}\label{sec: applications to ui}

We discuss homomorphic images of ultraproducts.  A good reference for the set-theoretic concepts is \cite{Jech}.

\begin{definition}\label{ultrafilter}  Recall that an \emph{ultrafilter $\mathcal{U}$ on a set $X$} is a collection of subsets of $X$ such that

\begin{enumerate}

\item $\emptyset \notin \mathcal{U}$;

\item for each $Y \subseteq X$ either $Y \in \mathcal{U}$ or $X \setminus Y \in \mathcal{U}$;

\item $X_0, X_1 \in \mathcal{U}$ implies $X_0 \cap X_1 \in \mathcal{U}$

\end{enumerate}

\noindent and the ultrafilter $\mathcal{U}$ is

\begin{enumerate}

\item[(4)] \emph{nonprincipal} if $\{x\} \notin \mathcal{U}$ for each $x \in X$.

\end{enumerate}
\end{definition}

Taking an ultraproduct of a collection of structures is a standard mathematical tool, which we will briefly recall.  If $\mathcal{U}$ is an ultrafilter on a set $I$ and $\{G_i\}_{i \in I}$ is a collection of groups, then the \emph{ultraproduct} $\prod_{i \in I} G_i/\mathcal{U}$ is the quotient of $\prod_{i \in I} G_i$ by the normal subgroup $\{(g_i)_{i \in I} \mid \{i \in I \mid g_i = 1\} \in \mathcal{U}\}$.  We'll begin with the following.

\begin{theorem}\label{withultraprod}  If $\phi: \prod_{n \in \mathbb{N}} G_n/\mathcal{U} \rightarrow H$ is an abstract group homomorphism with

\begin{itemize}

\item $\mathcal{U}$ a nonprincipal ultrafilter; and

\item $H$ a residually (bounded torsion) group with $|H| < 2^{\aleph_0}$

\end{itemize}

\noindent then every abelian subgroup of the image of $\phi$ is bounded torsion.
\end{theorem}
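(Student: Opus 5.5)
We reduce to Theorem \ref{main} by putting a suitable completely metrizable group topology on a group mapping onto $\prod_{\mathbb{N}} G_n/\mathcal{U}$. The difficulty is that the ultraproduct itself carries no natural Polish topology, so we work in the full product instead.

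\textbf{Setup.} Let $P = \prod_{n \in \mathbb{N}} G_n$ with each $G_n$ discrete. Topologize $P$ by the metric $d(x,y) = 2^{-\min\{n \mid x_n \neq y_n\}}$, where $d(x,y)=0$ if $x=y$. This is a complete metric and the product topology is a group topology, so $P$ is completely metrizable. The basic neighborhoods of $1$ are $P^s = \{x \mid x_n = 1 \text{ for } n < s\}$. Let $q: P \to \prod G_n/\mathcal{U}$ be the quotient map and set $\psi = \phi \circ q$. The key observation is that $q(P^s) = \prod G_n/\mathcal{U}$ for every $s$. Indeed, changing finitely many coordinates does not change the $\mathcal{U}$-class, because $\mathcal{U}$ is nonprincipal and so contains every cofinite set. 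Hence $\psi(U)$ equals the whole image $\phi(\prod G_n/\mathcal{U})$ for every $U \in \Nb(P,1)$.

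\textbf{Contradiction argument.} Suppose some abelian subgroup $A$ of the image is not bounded torsion. Then we can choose pairwise commuting $a_0, a_1, \ldots \in A$ with $O(a_i) \geq i+2$. This works even if $A$ contains an element of infinite order: take $a_i = a$ for that element. Otherwise the orders of elements of $A$ are unbounded. We now proceed exactly as in the proof of Theorem \ref{rebtorsion}. Choose $L_i \unlhd H$ with $a_i, \ldots, a_i^{i+1} \notin L_i$ and $H/L_i$ bounded torsion. Set $\overline{L_i} = L_0 \cap \cdots \cap L_i$ and $H_i = H/\overline{L_i}$, with the natural maps between them. Each $H_i$ is bounded torsion by the remark following Definition \ref{defn: residually bounded torsion}, which gives the bounds $N_i$. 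Let $\pi: H \to \varprojlim H_i$ be the natural map. Note that $\pi$ need not be injective, but $\pi(a_i)$ still has order at least $i+2$, because its image in $H_i$ does.

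Now apply Theorem \ref{main} with $\mathcal{G} = G = P$, the homomorphism $\pi \circ \psi$, and $X = \psi^{-1}(\{a_i\}_{i \in \mathbb{N}})$.
\begin{itemize}
\item $\pi\psi(X)$ generates an abelian group, since the $a_i$ commute.
\item For every $U \in \Nb(P,1)$, the set $U \cap X$ meets every fiber $\psi^{-1}(a_i)$, because $\psi(U)$ is the whole image. Hence $\sup\{O(\pi\psi(g)) \mid g \in U \cap X\} = \infty$.
\end{itemize}
Theorem \ref{main} then yields a set $\overline{X}_V$ with $\pi\psi$ injective on it and $|\overline{X}_V| = 2^{\aleph_0}$. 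But $|\pi\psi(P)| \leq |H| < 2^{\aleph_0}$, a contradiction.

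\textbf{Main obstacle.} The main point needing care is the choice of topology. We need both complete metrizability and the fact that every neighborhood of $1$ surjects onto the ultraproduct. The metric above delivers both, and the rest is routine.
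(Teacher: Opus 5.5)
Your proof is correct and is essentially the paper's argument: the paper topologizes $\prod G_n$ the same way, uses nonprincipality to see that every neighborhood of $1$ maps onto the ultraproduct, and then simply cites Theorem \ref{rebtorsion}, whose proof you have inlined. The only difference is that you apply Theorem \ref{main} directly, which lets you skip Lemma \ref{fromCoCo}, since the surjectivity of every neighborhood already gives the stabilization that lemma would supply.
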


\begin{proof}
The group $\prod_{n \in \mathbb{N}} G_n$ can be naturally topologized by considering each $G_n$ to be discrete and inducing the product topology.  It is well-known that this topological group is completely metrizable and has a basis of neighborhoods of identity given by subgroups of the form $\{1_{G_0}\} \times \{1_{G_1}\} \times \cdots \times \{1_{G_M}\} \times \prod_{n > M} G_n$.  Letting $\pi:  \prod_{n \in \mathbb{N}} G_n \rightarrow \prod_{n \in \mathbb{N}} G_n/\mathcal{U}$ be the quotient homomorphism, we know by the nonprincipality of $\mathcal{U}$ that the image under $\pi$ of any neighborhood of identity is $\prod_{n \in \mathbb{N}} G_n/\mathcal{U}$.  Now considering $\phi \circ \pi$ the claim follows from Theorem \ref{rebtorsion}.
\end{proof}

One fundamental question asks whether a group is the homomorphic image of a nonprincipal ultraproduct of groups (see \cite{Bergman} and \cite{Co}).  

\begin{definition}
    We say a group $H$ is \emph{ui} if there exists a sequence of groups $\{G_n\}_{n \in \mathbb{N}}$, a nonprincipal ultrafilter $\mathcal{U}$ on $\mathbb{N}$ and a surjective homomorphism from $\prod_{n \in \mathbb{N}} G_n/\mathcal{U}$ onto $H$ \cite[\textsection 3]{Co}.  
\end{definition}

\begin{corollary} \label{notui}  Suppose $H$ is residually (bounded torsion) and has an abelian subgroup which is not bounded torsion.  If either

\begin{itemize}

\item $H$ is torsion; or

\item $|H| < 2^{\aleph_0}$

\end{itemize}

\noindent then $H$ is not ui.

\end{corollary}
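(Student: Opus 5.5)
Both cases follow from results already proved, by contradiction. Suppose $\phi: \prod_{n \in \mathbb{N}} G_n/\mathcal{U} \rightarrow H$ is a surjective homomorphism with $\mathcal{U}$ nonprincipal, and let $A \leq H$ be an abelian subgroup which is not bounded torsion.

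The case $|H| < 2^{\aleph_0}$ is immediate. By Theorem \ref{withultraprod}, every abelian subgroup of the image of $\phi$ is bounded torsion. That image is all of $H$, so it contains $A$, which is a contradiction.

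The case where $H$ is torsion has no cardinality bound, so I will not go through Lemma \ref{fromCoCo}. Instead I apply Theorem \ref{main} directly and use torsion to contradict $(*)_4$.

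\emph{Setup.} Topologize $\prod_n G_n$ as in the proof of Theorem \ref{withultraprod}, so it is completely metrizable. Let $\pi$ be the quotient map onto the ultraproduct, and take $G = \mathcal{G} = \prod_n G_n$. Since $A$ is abelian, torsion and not bounded torsion, choose $a_0, a_1, \ldots \in A$ with $O(a_i) \geq i+2$. As in the proof of Theorem \ref{rebtorsion}, choose $L_i \unlhd H$ with $a_i, a_i^2, \ldots, a_i^{i+1} \notin L_i$ and $H/L_i$ bounded torsion. Put $H_i = H/(L_0 \cap \cdots \cap L_i)$, which is bounded torsion, say by $N_i$. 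Let $\rho: H \rightarrow \varprojlim H_i$ be the natural map and set $\psi = \rho \circ \phi \circ \pi$. Then the bounded-order hypothesis of Theorem \ref{main} holds with these $N_j$.

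\emph{Choice of $X$.} Let $U_i$ be the basic open subgroup consisting of elements whose coordinates $0, \ldots, i$ are trivial. By nonprincipality $\pi(U_i)$ is the whole ultraproduct, and $\phi$ is onto, so pick $x_i \in U_i$ with $\phi\pi(x_i) = a_i$. Set $X = \{x_i\}_{i \in \mathbb{N}}$.

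\emph{Checking the hypotheses.} $\psi(X) \subseteq \rho(A)$ generates an abelian group. Moreover $O(\psi(x_i)) \geq O(\pi_i \psi(x_i)) \geq i+2$, because $a_i^k \notin L_i \supseteq L_0 \cap \cdots \cap L_i$ for $1 \leq k \leq i+1$. Every neighborhood of $1$ contains some $U_m$, and $U_m \supseteq U_i$ for $i \geq m$, so it contains all but finitely many $x_i$. Hence the supremum of orders over $U \cap X$ is infinite for every $U \in \Nb(\mathcal{G}, 1)$.

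\emph{Contradiction.} Theorem \ref{main} now yields an element of infinite order in $\psi(\mathcal{G}) = \rho(H)$. But $\rho(H)$ is a homomorphic image of the torsion group $H$, so it is torsion, which is a contradiction.

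The only point requiring care is the choice of $X$ so that every neighborhood of identity meets it in elements of unbounded order. Nonprincipality of $\mathcal{U}$ handles this, since it makes each $U_i$ surject onto the ultraproduct.
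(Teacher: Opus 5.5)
Your proof is correct and follows the paper's argument: the case $|H| < 2^{\aleph_0}$ is Theorem \ref{withultraprod} applied to the surjection, and the torsion case applies Theorem \ref{main} $(*)_4$ to $\prod_n G_n$ with its natural completely metrizable topology, contradicting that the image is torsion. You have simply written out the inverse-limit setup (taken from the proof of Theorem \ref{rebtorsion}) and the nonprincipality argument for choosing $X$, both of which the paper's one-line proof leaves to the reader.
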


\begin{proof}
In case $|H| < 2^{\aleph_0}$ we can apply Theorem \ref{withultraprod}.  In case $H$ is torsion we suppose $H$ is a homomorphic image of an ultraproduct $\prod_{n \in \mathbb{N}} G_n/\mathcal{U}$ with $\mathcal{U}$ nonprincipal.  Then completely topologizing $\prod_{n \in \mathbb{N}} G_n$ in the natural way we apply Theorem \ref{main} $(*)_4$ to obtain a contradiction.

\end{proof}

We remark that the concept of ui group has close ties to that of measurable cardinals.   We say an ultrafilter $\mathcal{U}$ is \emph{$\kappa$-complete} if whenever $\mathcal{J} \subseteq \mathcal{U}$ has $|\mathcal{J}| < \kappa$ we have $\bigcap \mathcal{J} \in \mathcal{U}$.  Of course, ultrafilters are generally $\aleph_0$-complete (by Definition \ref{ultrafilter} (3)) and so the concept of $\kappa$-completeness is a strengthening of the fact that ultrafilters are closed under finite intersections.  A cardinal $\kappa > \aleph_0$ is \emph{measurable} if there exists a nonprincipal ultrafilter $\mathcal{U}$ on a set $X$, with $|X| = \kappa$, such that $\mathcal{U}$ is $\kappa$-complete.

A measurable cardinal must be quite large (it must be a so-called Mahlo cardinal \cite[Lemma 10.21]{Jech} and hence inaccessible).  As such, it is consistent with the standard ZFC axioms of set theory that there are no measurable cardinals.  It is a standard fact \cite[Exercise 6G.8]{Moschovakis} that if $X$ has a nonprincipal $\aleph_1$-complete ultrafilter, then $|X|$ is at least as large as the smallest measurable cardinal (so in particular, a measurable cardinal exists).  Thus there cannot exist an $\aleph_1$-complete nonprincipal ultrafilter on, say, $\mathbb{N}$.  Although measurable cardinals may seem like some abstruse idea only applicable in formal set theory, they are a common fixture in abelian group theory \cite{Eda}, non-abelian group theory \cite{EdaShelah} and even the theory of fundamental groups \cite{KeeslingRudyak}, \cite{Prz}.

\begin{lemma}\label{tocountable} \cite[Lemma 10]{Bergman}  The following are equivalent.

\begin{enumerate}

\item[(a)]  $H$ is a ui group.

\item[(b)]  There exists a collection $\{G_i\}_{i \in I}$ of groups, a nonprincipal ultrafilter $\mathcal{V}$ on $I$ which is not  $\aleph_1$-complete, and a surjective homomorphism from $\prod_{i \in I} G_i/\mathcal{V}$ to $H$.
\end{enumerate}

\end{lemma}

%\begin{proof}
%That (a) implies (b) is easy since there cannot exist a nonprincipal ultrafilter on $\mathbb{N}$ which is $\aleph_1$-complete.  Assume (b).  Since $\mathcal{V}$ is not countably complete, let $\{X_j\}_{j \in \mathbb{N}} \subseteq \mathcal{V}$ with $\bigcap_{j \in \mathbb{N}} X_j \notin \mathcal{V}$.  By taking finite intersections we may assume that $X_0 \supseteq X_1 \supseteq \cdots$, by skipping in the sequence we can also assume that $X_j$ includes $X_{j + 1}$ properly, and by subtracting the intersection $\bigcap_{j \in \mathbb{N}} X_j$ from all the terms in the descending chain we may further assume that $\bigcap_{j \in \mathbb{N}} X_j = \emptyset$.  By replacing $X_0$ with $I$ we may further assume that $X_0 = I$.  Letting $Y_j = X_j \setminus X_{j + 1}$ we see that $Y_j \notin \mathcal{V}$, the $Y_j$ are pairwise disjoint and nonempty, and that $\bigcup_{j \in \mathbb{N}} Y_j = I$.  Let $K_j = \prod_{i \in Y_j} G_i$ and $\mathcal{U} = \{X \subseteq \mathbb{N} \mid \bigcup_{j \in X}Y_j \in \mathcal{V}\}$.  Now combining the surjective homomorphisms $$\prod_{j \in \mathbb{N}}K_j/\mathcal{U} \rightarrow \prod_{i \in I}G_i/\mathcal{V} \rightarrow H$$ allows us to conclude (a).

%\end{proof}

\begin{lemma}\label{equivalencesmallerthanmeas}  Suppose a group $H$ is smaller than the smallest measurable cardinal (provided such a cardinal exists), $H$ is not a ui, and that $I$ is a set.  The following are equivalent.

\begin{enumerate}[(i)]

\item $H$ is a homomorphic image of a nonprincipal ultraproduct $\prod_{i \in I}G_i/\mathcal{U}$.

\item  $|I|$ is larger than the least measurable cardinal.

\end{enumerate}
\end{lemma}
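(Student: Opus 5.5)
The plan is to prove the two implications separately. One direction comes from Lemma \ref{tocountable} together with the cited standard fact about $\aleph_1$-complete ultrafilters. The other comes from the classical observation that an ultrapower by a sufficiently complete ultrafilter does not enlarge a small structure. Throughout I read (ii) as ``$|I|$ is at least the least measurable cardinal'', that is, $I$ is not smaller than it. This is the reading under which the equivalence can hold, since an index set of exactly measurable size already carries a suitable ultrafilter.

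For (i)$\Rightarrow$(ii), suppose $\phi:\prod_{i\in I}G_i/\mathcal{U}\to H$ is surjective with $\mathcal{U}$ nonprincipal. If $\mathcal{U}$ failed to be $\aleph_1$-complete, then clause (b) of Lemma \ref{tocountable} would hold. By that lemma $H$ would be ui, contradicting the hypothesis. Hence $\mathcal{U}$ is a nonprincipal $\aleph_1$-complete ultrafilter on $I$. By the standard fact quoted before Lemma \ref{tocountable} (\cite[Exercise 6G.8]{Moschovakis}), a measurable cardinal then exists and $|I|$ is at least the least one.

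For (ii)$\Rightarrow$(i), let $\kappa$ be the least measurable cardinal, so $|I|\geq\kappa$. Let $\mathcal{W}$ be a nonprincipal $\kappa$-complete ultrafilter on $\kappa$. Fix an injection $f:\kappa\to I$ and push $\mathcal{W}$ forward to $\mathcal{U}=\{Y\subseteq I\mid f^{-1}(Y)\in\mathcal{W}\}$. This is an ultrafilter, it remains $\kappa$-complete, and it is nonprincipal because each $f^{-1}(\{i\})$ has at most one point and so is not in $\mathcal{W}$.

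Now take $G_i=H$ for all $i$ and consider the diagonal homomorphism $\delta:H\to\prod_{i\in I}H/\mathcal{U}$. It is injective, since $\mathcal{U}$ is nonprincipal and in particular does not contain $\emptyset$. For surjectivity, take any $(h_i)_{i\in I}$ and partition $I$ into the fibres $I_h=\{i\mid h_i=h\}$ for $h\in H$. There are at most $|H|<\kappa$ of them. If no fibre belonged to $\mathcal{U}$, then all complements $I\setminus I_h$ would lie in $\mathcal{U}$. By $\kappa$-completeness their intersection, which is empty, would also lie in $\mathcal{U}$, a contradiction. So some $I_h\in\mathcal{U}$, and $(h_i)$ represents $\delta(h)$. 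Thus the ultrapower is isomorphic to $H$, and $\delta^{-1}$ is the required surjection.

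The only delicate points are bookkeeping ones: transferring completeness along the injection, and matching the strict versus non-strict reading of ``larger than'' in (ii). The substantive content is entirely carried by Lemma \ref{tocountable} and the quoted fact.
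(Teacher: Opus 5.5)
Your proof is correct and follows essentially the same route as the paper: for (i)$\Rightarrow$(ii) you use Lemma \ref{tocountable} to force $\mathcal{U}$ to be $\aleph_1$-complete, and for (ii)$\Rightarrow$(i) you transfer a $\kappa$-complete ultrafilter from $\kappa$ to $I$ and note that the ultrapower of $H$ is isomorphic to $H$ (the paper assumes $\kappa \subseteq I$ where you use an injection). Your non-strict reading of (ii) matches the paper's proof, which also works with $|I| \geq \kappa$.
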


\begin{proof}
Supposing (i) we have by Lemma \ref{tocountable} that $\mathcal{U}$ is $\aleph_1$-complete and so (ii) holds.  Supposing (ii) on the other hand, take $\kappa$ to be the smallest measurable cardinal.  Then $|I| \geq \kappa$ and there exists an ultrafilter $\mathcal{V}$ on $\kappa$ for which $\mathcal{J} \subseteq \mathcal{V}$ with $|\mathcal{J}| < \kappa$ implies $\bigcap \mathcal{J} \in \mathcal{V}$.  Without loss of generality $\kappa \subseteq I$ and we define $\mathcal{U} = \{X \subseteq I \mid X \cap \kappa \in \mathcal{V}\}$.  Note that $\mathcal{U}$ is a nonprincipal ultrafilter.  Letting $G_i = H$ for all $i \in I$ we obtain a surjective homomorphism from $\prod_{i \in I} G_i/\mathcal{U}$ to $H$ by letting $(h_i)_{i \in I}/\mathcal{U} \mapsto h$ where $\{i \in I \mid h_i = h\} \in \mathcal{U}$ (indeed the homomorphism is an isomorphism).  Thus (i) holds.

\end{proof}

\end{section}

\begin{section}{Applications to automorphism groups of rooted trees}\label{sec: applications to rooted trees}

Suppose that $\mathcal{X}$ is a finite nonempty set.  Let $\mathcal{X}^n$ denote the set of finite sequences $(x_i)_{0 \leq i <n} = (x_i)_n$ of elements of $\mathcal X$ having length $n$.  Let $\mathcal{X}^* = \bigcup_{n = 0}^{\infty} \mathcal{X}^n$ and for $s, s' \in \mathcal{X}^*$ we will say that $s = (x_i)_m$ is a \emph{prefix} of $s' = (x_i')_{n}$ (writing $s \preceq s'$) if $m \leq n$ and $x_i = x_i'$ for all $i < m$.

We will be considering subgroups of the automorphism group $\Aut(\mathcal{X}^*, \preceq)$ on the tree $(\mathcal{X}^*, \preceq)$ (with action written on the left).  If $H \leq \Aut(\mathcal{X}^*, \preceq)$ and $s \in \mathcal{X}^*$ we denote $H[s] = \{h \in H \mid hs' = s' \text{ for all } s \not\preceq s'\}$.  So, $H[s]$ is the subgroup of $H$ consisting of precisely those elements whose support is included in the subtree $\{t \in \mathcal{X}^* \mid s \preceq t\}$.  Given $h \in H$ and $s \in \mathcal X^*$ we write $\Orb(h, s)$ for the orbit of $s$ under the group action of $\langle h \rangle \leq H$.

\begin{lemma}\label{largeorders}  If $H \leq \Aut(\mathcal{X}^*, \preceq)$ is such that $H[s]$ is nontrivial for each $s \in \mathcal{X}^*$, then each $H[s]$ has elements of arbitrarily high (possibly infinite) order.
\end{lemma}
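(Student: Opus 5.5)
The plan is to measure orders through orbit sizes. Every automorphism of $(\mathcal{X}^*,\preceq)$ preserves length, and every $g$ satisfies $O(g) \geq |\Orb(g,t)|$ for every vertex $t$. So it suffices to show, for fixed $s$, that $H[s]$ contains elements having an orbit of size at least $2^k$ for every $k \geq 1$. I will do this by induction on $k$, maintaining the following invariant: there are $a \in H[s]$ and a vertex $v$ with $s \preceq v$ such that $|\Orb(a,v)| = m \geq 2^k$.

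\textbf{Base case.} Take $a \in H[s]$ nontrivial. Then $a$ moves some vertex $v$, and necessarily $s \preceq v$ by the definition of $H[s]$. Hence $|\Orb(a,v)| \geq 2$.

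\textbf{Inductive step.} Assume $a \in H[s]$ and $v$ satisfy $|\Orb(a,v)| = m$, and let $T_v = \{t \mid v \preceq t\}$. Then $a^m v = v$, so $a^m$ maps $T_v$ to itself. I split into two cases according to the restriction $a^m\upharpoonright T_v$.

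\emph{Case 1: $a^m \upharpoonright T_v$ is nontrivial.} Then $a^m$ moves some $u \in T_v$. The orbit of $u$ under $a$ projects onto the orbit of its ancestor $v$, and every fiber of this projection has the same size. Hence $|\Orb(a,u)|$ is a multiple of $m$. Since $a^m u \neq u$, this multiple is not $m$ itself, so $|\Orb(a,u)| \geq 2m$.

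\emph{Case 2: $a^m \upharpoonright T_v$ is the identity.} Use the hypothesis to pick a nontrivial $b \in H[v]$; note $H[v] \subseteq H[s]$ because $s \preceq v$. Set $c = ba \in H[s]$. The element $b$ fixes every vertex not in $T_v$, in particular every vertex at the level of $v$. So $c$ acts on that level exactly as $a$ does, and $|\Orb(c,v)| = m$.

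The key computation, and the step I expect to need the most care, is
\[
c^m \upharpoonright T_v = b \circ (a^m \upharpoonright T_v).
\]
To see this, follow a vertex $x \in T_v$ through $c^m = (ba)\cdots(ba)$. The first application of $a$ carries $x$ into $T_{av}$, where $b$ acts trivially. The vertex stays outside $T_v$ for the next $m-1$ applications, since $a^j v \neq v$ for $0<j<m$. It returns to $T_v$ only at the last application of $a$, after which the final factor $b$ acts.

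In Case 2 this gives $c^m \upharpoonright T_v = b$, which is nontrivial. Choose $u \in T_v$ with $bu \neq u$. Then $c^m u \neq u$, and the same projection argument as in Case 1 gives $|\Orb(c,u)| \geq 2m$.

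In either case we obtain an element of $H[s]$ and a vertex below $s$ with orbit size at least $2m \geq 2^{k+1}$. This completes the induction.

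Consequently $\sup\{O(g) \mid g \in H[s]\} = \infty$, i.e., $H[s]$ has elements of arbitrarily high (possibly infinite) order.
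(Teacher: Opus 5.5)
Your proof is correct and is essentially the paper's argument. The paper's relation $t R_h t'$ (two distinct vertices strictly below $t$ in the same $\langle h\rangle$-orbit) holds for some $t'$ exactly when $h^{m}$, with $m = |\Orb(h,t)|$, acts nontrivially below $t$, which is your Case 1. When it fails, the paper likewise left-multiplies by a nontrivial $h_0 \in H[t]$ and uses the same computation $(h_0h)^{m}t' = h_0t'$ to double the orbit size, building the chain $t_0 R_{h_0} t_1 R_{h_1h_0} t_2 \cdots$ that your induction on $(a,v)$ tracks one step at a time.
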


\begin{proof}
Assume the hypothesis and fix $s \in \mathcal{X}^*$.  For $h \in H[s]$ we define a binary relation $R_h$ on elements of $\mathcal{X}^*$ by letting $t R_h t'$ if $t \prec t'$ and there exists $t \prec t''$ with $t' \neq t''$ and $\Orb(h, t') = \Orb(h, t'')$.  Clearly $t R_h t'$ implies that $|\Orb(h, t')| \geq 2|\Orb(h, t)|$, and if $t \in \mathcal{X}^m$ and $h_0 \in H[t]$ then $\Orb(h, t) = \Orb(h_0h, t)$.

Note that if there does not exist $t'$ such that $t R_h t'$ then $|\Orb(h, t)| = |\Orb(h, t')|$ for each $t \prec t'$ and in fact for every $k \in \mathbb{N}$ we know $h^kt'$ is the unique element of $\Orb(h, t')$ which extends $h^kt$.  In this case we may (by assumption) select a nontrivial $h_0 \in H[t]$ and $t \prec t'$ with $h_0t' \neq t'$, and now $\Orb(h, t) = \Orb(h_0h, t)$ and $t R_{h_0h} t'$ (the latter holds since $(h_0h)^kt' = h^kt'$ for $0 \leq k < |\Orb(h, t')|$ and $(h_0h)^{|\Orb(h, t')|}t' = h_0t'$).

Now we are ready to prove the lemma.  We shall produce a strictly ascending chain $t_0 \prec t_1 \prec \cdots$ together with a sequence $h_0, h_1, \ldots$ of elements of $H[s]$ by induction.  Let $t_0 = s$ and take $h_0 \in H[t_0]$ to be nontrivial.  As $h_0$ fixes $t_0$ we may select $t_0 \prec t_1$ with $t_0 R_{h_0} t_1$.  If it is possible to select $t_2$ with $t_1 R_{h_0} t_2$ then do so and set $h_1$ to be identity.  If it is not possible to select such a $t_2$ then select nontrivial $h_1 \in H[t_1]$ and $t_2$ for which $h_1t_2 \neq t_2$ and we have $t_1 R_{h_1h_0} t_2$.  Continuing in this way we have $$t_0 R_{h_0} t_1 R_{h_1h_0} t_2 R_{h_2h_1h_0} t_3 \cdots$$ with $h_i \in H[t_i]$.  Therefore $|\Orb(h_{\ell} \cdots h_0, t_{\ell + 1})| \geq 2^{\ell + 1}$ and so the order of $h_{\ell} \cdots h_0$ is at least $2^{\ell + 1}$ and we are done.
\end{proof}

\begin{lemma}\label{applicabilitytotreeation}   If $H \leq \Aut(\mathcal{X}^*, \prec)$ has $H[s]$ is nontrivial for each $s \in \mathcal{X}^*$ then there exists a set of elements $\{a_0, a_1, \ldots \} \subseteq H$ which generate an abelian subgroup and $O(a_m) \geq m + 2$.
\end{lemma}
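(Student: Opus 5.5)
Rather than look for one long commuting sequence inside a single rigid stabilizer, I will use elements with pairwise disjoint supports; these commute automatically.

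Fix an infinite antichain $s_0, s_1, s_2, \ldots$ in $(\mathcal{X}^*, \preceq)$, meaning no $s_i$ is a prefix of $s_j$ for $i \neq j$. Then the subtrees $\{t \mid s_m \preceq t\}$ are pairwise disjoint.

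Such an antichain exists whenever $|\mathcal{X}| \geq 2$. Pick distinct $x, y \in \mathcal{X}$ and let $s_m$ be the word $x^m y$, that is, $m$ copies of $x$ followed by one $y$. Two such words of different lengths disagree at the position where the shorter one has its final $y$ and the longer one has $x$, so neither is a prefix of the other.

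The case $|\mathcal{X}| = 1$ needs a separate remark. There the tree is a ray, so every automorphism is trivial, and the hypothesis that $H[s]$ is nontrivial fails. So this case cannot occur and we may assume $|\mathcal{X}| \geq 2$.

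Next I apply Lemma \ref{largeorders} to each $s_m$. It gives an element $a_m \in H[s_m]$ with $O(a_m) \geq m+2$; by that lemma, $H[s_m]$ has elements of arbitrarily high, possibly infinite, order.

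It remains to check that the $a_m$ commute pairwise. Each $a_m$ fixes every vertex outside the subtree $T_m = \{t \mid s_m \preceq t\}$. It also maps $T_m$ into itself. To see this, note that an automorphism of the rooted tree preserves length and the prefix relation, and $a_m$ fixes $s_m$ because $s_m \not\preceq$-properly-extends itself in the relevant sense. More precisely, $s_m$ has the same length as $s_m$ and $a_m$ fixes every vertex not extending $s_m$; since $a_m$ permutes the vertices of each length, it must send $s_m$ to some vertex of the same length extending $s_m$, which can only be $s_m$. Consequently $a_m$ preserves $T_m$.

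Now take $m \neq n$ and a vertex $t$. Because $T_m$ and $T_n$ are disjoint, $t$ lies outside at least one of them, and there are three cases.
\begin{itemize}
\item If $t \in T_m$, then $a_m t \in T_m$. Both $t$ and $a_m t$ lie outside $T_n$, so $a_n$ fixes them, and hence $a_m a_n t = a_m t = a_n a_m t$.
\item If $t \in T_n$, the same argument applies with the roles of $m$ and $n$ swapped.
\item If $t$ lies in neither subtree, both elements fix $t$, so $a_m a_n t = t = a_n a_m t$.
\end{itemize}
Thus $a_m a_n = a_n a_m$. The set $\{a_m\}$ therefore generates an abelian subgroup, and $O(a_m) \geq m+2$ holds by construction.

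The main obstacle is the invariance of $T_m$ under $a_m$. It rests on $a_m$ fixing $s_m$, which follows from length preservation as explained above, so the remaining verifications are routine.
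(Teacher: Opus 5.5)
Your proof is correct and follows essentially the same route as the paper. Both arguments choose an infinite antichain $s_0, s_1, \ldots$ (possible because $H[\emptyset]$ being nontrivial forces $|\mathcal{X}| \geq 2$), apply Lemma \ref{largeorders} to get $a_m \in H[s_m]$ with $O(a_m) \geq m+2$, and use that elements supported on disjoint subtrees commute. The paper simply calls that last step ``clear,'' and your case check supplies the routine details; the invariance of $\{t \mid s_m \preceq t\}$ also follows directly from injectivity of $a_m$, since $a_m$ fixes the complement pointwise.
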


\begin{proof}
Let $\{s_0, s_1, s_2, \ldots\}$ be a collection of elements in $\mathcal{X}^*$ such that $s_m \not \preceq s_n$ if $m \neq n$.  (It is easy to find such a collection since by assumption $H = H[\emptyset]$ is nontrivial and therefore $\mathcal{X}$ has at least two elements.)  By Lemma \ref{largeorders} find $a_m \in H[s_m]$ with $O(a_m) \geq m + 2$.  That $\{a_0, a_1, \ldots \}$ generates an abelian subgroup is clear.
\end{proof}

\begin{theorem}\label{thmfortreegroups}  Suppose $H \leq \Aut(\mathcal{X}^*, \prec)$ and $H[s]$ is nontrivial for each $s \in \mathcal{X}^*$.

\begin{enumerate}

\item If $|H| < 2^{\aleph_0}$ and $\phi: G \rightarrow H$ is a homomorphism with $G$ completely metrizable or locally countably compact then there is some $V \in \Nb(G, 1)$ for which $\phi(V) \neq H$.

\item If $|H| < 2^{\aleph_0}$ and $\phi: \mathbb{H} \rightarrow H$ is a homomorphism then there is some $N \in \mathbb{N}$ for which $\phi(\mathbb{H}^N) \neq H$.

\item If $H$ is torsion or $|H| < 2^{\aleph_0}$ then $H$ is not a ui-group.

\end{enumerate}
\end{theorem}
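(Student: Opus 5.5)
The plan is to reduce all three parts to results already proved: Theorem \ref{rebtorsion}, Theorem \ref{rebtorsionforH} and Corollary \ref{notui}. For this we need two facts about $H$. The first is that $H$ is residually (bounded torsion). The second is that $H$ has an abelian subgroup which is not bounded torsion.

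\textbf{Residual property.} For each $n$, let $L_n \unlhd H$ be the kernel of the action of $H$ on the finite level $\mathcal{X}^n$. Then $H/L_n$ embeds in the symmetric group on $\mathcal{X}^n$, so it is finite. In particular every element satisfies $g^{N} = 1$ with $N = (|\mathcal{X}|^n)!$, so $H/L_n$ is bounded torsion. If $h \in H \setminus \{1\}$, then $h$ moves some vertex $t$, say $t \in \mathcal{X}^n$, and hence $h \notin L_n$. So $H$ is residually finite, and therefore residually (bounded torsion) in the sense of Definition \ref{defn: residually bounded torsion}.

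\textbf{Abelian subgroup.} The hypothesis that every $H[s]$ is nontrivial is exactly what Lemma \ref{applicabilitytotreeation} needs. It gives pairwise commuting elements $a_0, a_1, \ldots$ with $O(a_m) \geq m+2$. These generate an abelian subgroup $A \leq H$ which is not bounded torsion.

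\textbf{The three parts.}
\begin{enumerate}
\item Apply Theorem \ref{rebtorsion} to $\phi$, reading ``locally countably compact'' as including the Hausdorff hypothesis, as there. This yields $V \in \Nb(G,1)$ such that $\phi(V)$ is a subgroup all of whose abelian subgroups are bounded torsion. If $\phi(V) = H$, then $A \leq \phi(V)$ would be bounded torsion, which is a contradiction. Hence $\phi(V) \neq H$.
\item The same argument works with Theorem \ref{rebtorsionforH} in place of Theorem \ref{rebtorsion}. It gives $N$ such that every abelian subgroup of $\phi(\mathbb{H}^N)$ is bounded torsion, so $\phi(\mathbb{H}^N) \neq H$.
\item This is Corollary \ref{notui}: $H$ is residually (bounded torsion), contains the abelian subgroup $A$ which is not bounded torsion, and is either torsion or of size less than $2^{\aleph_0}$.
\end{enumerate}

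The only step that needs care is the residual finiteness verification. Even that is routine: an automorphism preserving $\preceq$ preserves lengths, so it permutes each level $\mathcal{X}^n$. There is no real obstacle beyond matching hypotheses; in particular the Hausdorff assumption in (1) should be taken as implicit.
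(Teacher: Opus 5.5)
Your proposal is correct and follows the paper's proof. The paper likewise obtains the non-bounded-torsion abelian subgroup from Lemma \ref{applicabilitytotreeation}, then applies Theorem \ref{rebtorsion}, Theorem \ref{rebtorsionforH} and Corollary \ref{notui} for (1), (2) and (3); your check that $H$ is residually finite via the level-action kernels, which the paper leaves implicit, is a welcome addition.
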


\begin{proof}
Use Lemma \ref{applicabilitytotreeation} together with Theorem \ref{rebtorsion} for (1), with Theorem \ref{rebtorsionforH} for (2), with Corollary \ref{notui} for (3).
\end{proof}

Note that the group $H = \Aut(\mathcal{X}^*, \prec)$ is a compact Polish group under the natural topology.  As such, it is easily seen to be a ui group (an ultrapower of $H$ maps onto $H$), so it is clear why the assumptions in Theorem \ref{thmfortreegroups} (3) are essential.  Moreover the identity homomorphism $H \rightarrow H$ witnesses why the assumption $|H| < 2^{\aleph_0}$ is essential in Theorem \ref{rebtorsion}.

Recall that a group $H \leq \Aut(\mathcal{X}^*, \prec)$ is \emph{weakly branch} if it satisfies the hypotheses of Lemma \ref{largeorders} and has level transitive action on $\mathcal{X}^*$ \cite[Definition 1.13]{BartholdiGrigorchukSunik}.  Examples of weakly branch groups include the Grigorchuk group \cite{Grigorchuk} and the Basilica group \cite{GrigorchukZuk}, both of which are finitely generated.  So, Grigorchuk's group is an example of a finitely generated infinite torsion group which is not ui.  Since the Basilica group is torsion-free, countable, and residually finite we know by Corollaries \ref{torsionfree cm-slender} and \ref{torsionfree n-slender} that it is slender in each of the non-abelian senses considered in this paper.  By combining Theorem \ref{thmfortreegroups} (3) and Lemma \ref{equivalencesmallerthanmeas} we obtain the following.

\begin{corollary}\label{Ggroupmeasurable} Grigorchuk's group (or indeed any countable non-(ui) group) is a homomorphic image of a nonprincipal ultraproduct of groups if and only if there exists a measurable cardinal.
\end{corollary}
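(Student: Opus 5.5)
The proof applies Lemma \ref{equivalencesmallerthanmeas} to $H$, which is either Grigorchuk's group or an arbitrary countable non-(ui) group. First I check the hypotheses of that lemma. Grigorchuk's group is not ui by Theorem \ref{thmfortreegroups} (3). It acts faithfully on the binary rooted tree and is weakly branch, so $H[s]$ is nontrivial for each $s \in \mathcal{X}^*$. It is also countable, so $|H| = \aleph_0 < 2^{\aleph_0}$ (and torsion as well). A measurable cardinal is uncountable by definition, so any countable group is smaller than the least measurable cardinal whenever one exists. Thus, for every index set $I$, Lemma \ref{equivalencesmallerthanmeas} gives: $H$ is a homomorphic image of a nonprincipal ultraproduct $\prod_{i \in I} G_i/\mathcal{U}$ if and only if $|I|$ is at least the least measurable cardinal. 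In the lemma, condition (ii) should be read as ``$|I| \geq$ the least measurable cardinal''; its proof uses $|I| \geq \kappa$.

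For the forward direction, suppose $H$ is a homomorphic image of some nonprincipal ultraproduct $\prod_{i \in I} G_i/\mathcal{U}$. The implication (i)$\Rightarrow$(ii) of the lemma then forces $|I|$ to be at least the least measurable cardinal, so in particular such a cardinal exists. Underneath, the argument is this: since $H$ is not ui, Lemma \ref{tocountable} shows $\mathcal{U}$ must be $\aleph_1$-complete. The cited fact \cite[Exercise 6G.8]{Moschovakis} then says a nonprincipal $\aleph_1$-complete ultrafilter exists only on a set of size at least some measurable cardinal.

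For the converse, suppose a measurable cardinal exists and let $\kappa$ be the least one. Take $I = \kappa$, $G_i = H$ for all $i$, and $\mathcal{U}$ a $\kappa$-complete nonprincipal ultrafilter on $\kappa$. The map $(h_i)/\mathcal{U} \mapsto h$, where $h$ is the value taken on a $\mathcal{U}$-large set, is then a well-defined surjection onto $H$; this is exactly the (ii)$\Rightarrow$(i) direction of the lemma.

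Well-definedness of that map is the only delicate point, and it is where the smallness of $H$ enters. The sets $\{i \mid h_i = h\}$ for $h \in H$ partition $I$ into countably many pieces. Since $\mathcal{U}$ is $\aleph_1$-complete, exactly one piece belongs to $\mathcal{U}$: otherwise the complements, of which there are fewer than $\kappa$, would all lie in $\mathcal{U}$ and so would their intersection, which is empty. Once this is in place, the rest is just quoting the earlier results.
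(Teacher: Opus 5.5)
Your proposal is correct and follows the paper's own route: Theorem \ref{thmfortreegroups}~(3) shows Grigorchuk's group is not ui, and Lemma \ref{equivalencesmallerthanmeas} then gives the equivalence, since countability places $H$ below any measurable cardinal. You are also right that condition (ii) of that lemma should read ``$|I| \geq$ the least measurable cardinal'', because its proof takes $\kappa \subseteq I$ with $I = \kappa$ allowed.
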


\end{section}

\end{document}